\newtheorem{theorem}{Theorem}
\theoremstyle{plain}
\newtheorem{corollary}{Corollary}
\newtheorem{lemma}{Lemma}
\newtheorem{remark}{Remark}
\numberwithin{equation}{section}
\begin{document}
\title[An Implicit Algorithm for Multivalued Mappings]{A New Iterative
Projection Method for Approximating Fixed Point Problems and Variational
Inequality Problems}
\author{Ibrahim Karahan}
\address{Department of Mathematics, Erzurum Technical University, Erzurum
25240, Turkey}
\email{ibrahimkarahan@erzurum.edu.tr}
\author{Murat Ozdemir}
\address{Department of Mathematics, Ataturk University, Erzurum 25240, Turkey%
}
\email{mozdemir@atauni.edu.tr}
\subjclass[2000]{ 49J30, 47H09, 47J20}
\keywords{Variational inequalities, fixed point problems, strong convergence}

\begin{abstract}
In this paper, we introduce and study a new extragradient iterative process
for finding a common element of the set of fixed points of an infinite
family of nonexpansive mappings and the set of solutions of a variational
inequality for an inverse strongly monotone mapping in a real Hilbert space.
Also, we prove that under quite mild conditions the iterative sequence
defined by our new extragradient method converges strongly to a solution of
the fixed point problem for an infinite family of nonexpansive mappings and
the classical variational inequality problem. In addition, utilizing this
result, we provide some applications of the considered problem not just
giving a pure extension of existing mathematical problems.
\end{abstract}

\maketitle

\section{Introduction}

Throughout this paper, we assume that $H$ is a real Hilbert space whose
inner product and norm are denoted by $\left\langle \cdot ,\cdot
\right\rangle $ and $\left\Vert \cdot \right\Vert $, respectively, $C$ is a
nonempty closed convex subset of $H$, and $I$ is the idendity mapping on $C$%
. Below, we gather some basic definitions and results which are needed in
the subsequent sections. Recall that a mapping $T:C\rightarrow C$ is called
nonexpansive if 
\begin{equation*}
\left\Vert Tx-Ty\right\Vert \leq \left\Vert x-y\right\Vert ,\text{ }\forall
x,y\in C.
\end{equation*}%
We denote by $F(T)$ the set of fixed points of $T$. For a mapping $%
A:C\rightarrow H,$ it is said to be

\begin{enumerate}
\item[(i)] monotone if%
\begin{equation*}
\left\langle Ax-Ay,x-y\right\rangle \geq 0,\text{ }\forall x,y\in C;
\end{equation*}

\item[(ii)] $L$-Lipschitzian if there exists a constant $L>0$ such that 
\begin{equation*}
\left\Vert Ax-Ay\right\Vert \leq L\left\Vert x-y\right\Vert ,\text{ }\forall
x,y\in C;
\end{equation*}

\item[(iii)] $\alpha $-inverse strongly monotone if there exists a positive
real number $\alpha >0$ such that%
\begin{equation*}
\left\langle Ax-Ay,x-y\right\rangle \geq \alpha \left\Vert Ax-Ay\right\Vert
^{2},\text{ }\forall x,y\in C.
\end{equation*}
\end{enumerate}

\begin{remark}
\label{z}It is obvious that any $\alpha $-inverse strongly monotone mapping $%
A$ is monotone and $\frac{1}{\alpha }$-Lipschitz continuous.
\end{remark}

\begin{remark}
\label{y}Every $L$-Lipschitzan mapping is $\frac{2}{L}$-inverse strongly
monotone mapping.
\end{remark}

For a mapping $A:C\rightarrow H$, the classical variational inequality
problem $VI\left( C,A\right) $ is to find a $x\in C$ such that%
\begin{equation}
\left\langle Ax,y-x\right\rangle \geq 0,\text{ }\forall y\in C,  \label{25}
\end{equation}%
which is the optimality condition for the minimization problem%
\begin{equation}
\min_{x\in C}\frac{1}{2}\left\langle Ax,x\right\rangle .  \label{13}
\end{equation}%
The set of solutions of $VI\left( C,A\right) $ is denoted by $\Omega ,$
i.e., 
\begin{equation*}
\Omega =\left\{ x\in C:\text{ }\left\langle Ax,y-x\right\rangle \geq 0\text{%
, }\forall y\in C\right\} .
\end{equation*}

In the context of the variational inequality problem it is easy to check that%
\begin{equation*}
x\in \Omega \Leftrightarrow x\in F\left( P_{C}\left( I-\lambda A\right)
\right) ,\text{ }\forall \lambda >0.
\end{equation*}%
Variational inequalities were initially studied by Stampacchia \cite{stam1}, 
\cite{stam2}. Such a problem is connected with convex minimization problem,
the complementarity problem, the problem of finding point $x\in C$
satisfying $0\in Ax$ and etc.. Fixed point problems are also closely related
to the variational inequality problems. Based on this relationship,
iterative methods for nonexpansive mappings have recently been applied to
find the common solution of fixed point problems and variational inequality
problems; see, for example \cite{yalis, caku, lita2, wosaya, yaliya} and the
references therein. Below, we give some of them.

In 2005, Iiduka and Takahashi \cite{lita} proposed an iterative process as
follows:%
\begin{equation}
\left\{ 
\begin{array}{l}
x_{1}\in C, \\ 
x_{n+1}=\alpha _{n}x+\left( 1-\alpha _{n}\right) TP_{C}\left( I-\lambda
_{n}A\right) x_{n},\text{ }\forall n\geq 1\text{,}%
\end{array}%
\right.  \label{D}
\end{equation}%
where $A$ is an $\alpha $-inverse strongly monotone mapping, $\left\{ \alpha
_{n}\right\} \subset \left( 0,1\right) $ and $\left\{ \lambda _{n}\right\}
\in \left( 0,2\alpha \right) $ satisfy some parameters controlling
conditions. They showed that if $F\left( T\right) \cap \Omega $\ is
nonempty, then the sequence $\left\{ x_{n}\right\} $ generated by (\ref{D})
converges strongly to some $z\in F\left( T\right) \cap \Omega $.

One year later, in 2006, by a narrow margin from the iterative process (\ref%
{D}), Takahashi and Toyoda \cite{tato} introduced the following iterative
process which is based on the Mann iteration \cite{mann}:%
\begin{equation}
\left\{ 
\begin{array}{l}
x_{0}\in C, \\ 
x_{n+1}=\alpha _{n}x_{n}+\left( 1-\alpha _{n}\right) TP_{C}\left( I-\lambda
_{n}A\right) x_{n},\text{ }\forall n\geq 0,%
\end{array}%
\right.   \label{A}
\end{equation}%
where $C$ is a nonempty closed convex subset of a real Hilbert space $H,$ $%
P_{C}:H\rightarrow C$ is a metric projection, $A:C\rightarrow H$ is an $%
\alpha $-inverse strongly monotone mapping, and $T:C\rightarrow C$ is a
nonexpansive mapping. They proved that if the set of fixed points of $T$ is
nonempty, then the sequence $\left\{ x_{n}\right\} $ generated by (\ref{A})
converges weakly to some $z\in F\left( T\right) \cap \Omega $ where $%
z=\lim_{n\rightarrow \infty }P_{F\left( T\right) \cap \Omega }x_{n}.$ In the
same year, Yao et. al. \cite{yaliya}, introduced following iterative scheme
for a nonexpansive mapping $S,$ and a monotone $k$-Lipschitzian continuous
mapping $A$. Under the suitable conditions, they proved the strong
convergence of $\left\{ x_{n}\right\} $ for a fixed $u\in H$\ and a given $%
x_{0}\in H$ arbitrary. 
\begin{equation}
\left\{ 
\begin{array}{l}
x_{n+1}=\alpha _{n}u+\beta _{n}x_{n}+\gamma _{n}SP_{C}\left( x_{n}-\lambda
_{n}y_{n}\right) , \\ 
y_{n}=P_{C}\left( I-\lambda _{n}A\right) x_{n},\text{ }\forall n\geq 0.%
\end{array}%
\right.   \label{B}
\end{equation}

Lastly, Khan \cite{khan} and Sahu \cite{sahu}, individually, introduced the
following iterative process which Khan referred to as Picard-Mann hybrid
iterative process:%
\begin{equation}
\left\{ 
\begin{array}{l}
x_{1}\in C, \\ 
x_{n+1}=Ty_{n}, \\ 
y_{n}=\alpha _{n}x_{n}+\left( 1-\alpha _{n}\right) Tx_{n},\text{ }\forall
n\geq 1,%
\end{array}%
\right.  \label{C}
\end{equation}%
where $\left\{ \alpha _{n}\right\} $ is a sequence in $\left( 0,1\right) .$
Picard-Mann hybrid iterative process is independent of all Picard, Mann and
Ishikawa iterative processes.\ Khan \cite{khan} showed that the process (\ref%
{C}) converges faster than all of Picard, Mann and Ishikawa iterative
processes for contractions. Moreover, he proved a strong and a weak
convergence theorems in Banach space for iterative process (\ref{C}) with a $%
T$ nonexpansive mapping under the suitable conditions.

In addition to all these studies, the existence of common elements of the
set of common fixed points of an infinite family of nonlinear mappings and
the set of solutions of the variational inequality problem has been also
considered by many authors (see \cite{yao1, rabian, wang1}). In such
articles, authors usually use a mapping generated by nonexpansive mappings
such as the mapping $W_{n}$ defined, as in Shimoji and Takahashi \cite{shi},
by%
\begin{eqnarray}
U_{n,n+1} &=&I  \notag \\
U_{n,n} &=&\mu _{n}T_{n}U_{n,n+1}+\left( 1-\mu _{n}\right) I  \notag \\
U_{n,n-1} &=&\mu _{n-1}T_{n-1}U_{n,n}+\left( 1-\mu _{n-1}\right) I  \notag \\
&&\vdots   \notag \\
U_{n,k+1} &=&\mu _{k+1}T_{k+1}U_{n,k+2}+\left( 1-\mu _{k+1}\right) I
\label{11} \\
U_{n,k} &=&\mu _{k}T_{k}U_{n,k+1}+\left( 1-\mu _{k}\right) I  \notag \\
&&\vdots   \notag \\
U_{n,2} &=&\mu _{2}T_{2}U_{n,3}+\left( 1-\mu _{2}\right) I  \notag \\
W_{n} &=&U_{n,1}=\mu _{1}T_{1}U_{n,2}+\left( 1-\mu _{1}\right) I  \notag
\end{eqnarray}%
where $C$ is a nonempty closed convex subset of a Hilbert space $H,$ $\mu
_{1},\mu _{2},\ldots $ are real numbers such that $0\leq \mu _{n}\leq 1,$
and $T_{1},T_{2},\ldots $ is an infinite family of self-mappings on $C$. $%
W_{n}$ is called $W$-mapping generated by $T_{n},T_{n-1},\ldots ,T_{1}$ and $%
\mu _{n},\mu _{n-1},\ldots ,\mu _{1}.$ It is clear that nonexpansivity of
each $T_{i}$, $i\geq 1$, ensures the nonexpansivity of $W_{n}$.

In this paper, motivated and inspired by the above processes and
independently from all of them, we introduce the following iterative process
for an infinite family of nonexpansive mappings $\left\{ T_{n}\right\} $
which is based on Picard-Mann hybrid iterative process:%
\begin{equation}
\left\{ 
\begin{array}{l}
x_{0}\in C \\ 
x_{n+1}=W_{n}P_{C}\left( I-\lambda _{n}A\right) y_{n} \\ 
y_{n}=\left( 1-\alpha _{n}\right) x_{n}+\alpha _{n}W_{n}P_{C}\left(
I-\lambda _{n}A\right) x_{n},\text{ }\forall n\geq 0,%
\end{array}%
\right.   \label{12}
\end{equation}%
where $A:C\rightarrow H$ is an $\alpha $-inverse strongly monotone mapping, $%
W_{n}$ is a mapping defined by (\ref{11}), $\{\lambda _{n}\}\subset \lbrack
a,b]$ for some $a,b\in (0,2\alpha )$ and $\left\{ \alpha _{n}\right\}
\subset \left[ c,d\right] $ for some $c,d\in \left( 0,1\right) $. Also, we
prove that the sequence $\left\{ x_{n}\right\} $ defined by (\ref{12})
converge strongly to a common element of the set of common fixed points of
the infinite family $\left\{ T_{n}\right\} $ and the set of solutions of the
variational inequality (\ref{25}) which is the optimality condition for the
minimization problem (\ref{13}).

\section{Preliminaries}

In this section, we collect some useful lemmas that will be used for our
main result in the next section. We write $x_{n}\rightharpoonup x$ to
indicate that the sequence $\left\{ x_{n}\right\} $ converges weakly to $x,$
and $x_{n}\rightarrow x$ for the strong convergence.

It is well known that for any $x\in H,$ there exists a unique point $%
y_{0}\in C$ such that%
\begin{equation*}
\left\Vert x-y_{0}\right\Vert =\inf \left\{ \left\Vert x-y\right\Vert :y\in
C\right\} .
\end{equation*}%
We denote $y_{0}$ by $P_{C}x,$ where $P_{C}$ is called the metric projection
of $H$ onto $C.$ We know that $P_{C}$ is a nonexpansive mapping. It is also
known that $P_{C}$ has the following properties:

\begin{enumerate}
\item[(i)] $\left\Vert P_{C}x-P_{C}y\right\Vert \leq \left\Vert
x-y\right\Vert ,$ for all $x,y\in H,$

\item[(ii)] $\left\Vert x-y\right\Vert ^{2}\geq \left\Vert
x-P_{C}x\right\Vert ^{2}+\left\Vert y-P_{C}x\right\Vert ^{2},$ for all $x\in
H,$ $y\in C,$

\item[(iii)] $\left\langle x-P_{C}x,y-P_{C}x\right\rangle \leq 0,$ for all $%
x\in H,$ $y\in C.$
\end{enumerate}

It is known that a Hilbert space $H$ satisfies the Opial condition that, for
any sequence $\left\{ x_{n}\right\} $ with $x_{n}\rightharpoonup x,$ the
inequality%
\begin{equation*}
\lim \inf_{n\rightarrow \infty }\left\Vert x_{n}-x\right\Vert <\lim
\inf_{n\rightarrow \infty }\left\Vert x_{n}-y\right\Vert
\end{equation*}%
holds for every $y\in H$ with $y\neq x.$

\begin{lemma}
\label{c}\cite{tato} Let $C$ be a nonempty closed convex subset of a real
Hilbert space $H$ and $\left\{ x_{n}\right\} $ be a sequence in $H.$ Suppose
that, for all $z\in C,$%
\begin{equation*}
\left\Vert x_{n+1}-z\right\Vert \leq \left\Vert x_{n}-z\right\Vert
\end{equation*}%
for every $n=0,1,2,\ldots .$ Then, $\left\{ P_{C}x_{n}\right\} $ converges
strongly to some $u\in C.$
\end{lemma}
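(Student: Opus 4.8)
The plan is to prove that $\{P_C x_n\}$ is a Cauchy sequence. Since $C$ is a closed subset of the complete Hilbert space $H$, it is itself complete, so any Cauchy sequence in $C$ converges strongly to a point of $C$; this will furnish the desired $u\in C$. Throughout I abbreviate $u_n=P_C x_n$ and write $d_n=\|x_n-u_n\|$ for the distance from $x_n$ to $C$.

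First I would show that $\{d_n\}$ is nonincreasing, and hence convergent. Because $u_n\in C$ while $u_{n+1}$ is the point of $C$ closest to $x_{n+1}$, we have $d_{n+1}=\|x_{n+1}-u_{n+1}\|\le\|x_{n+1}-u_n\|$; applying the standing hypothesis with the admissible choice $z=u_n$ gives $\|x_{n+1}-u_n\|\le\|x_n-u_n\|=d_n$. Thus $d_{n+1}\le d_n$ and, being bounded below by $0$, the sequence $\{d_n\}$ converges to some $d\ge 0$.

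The crux is the Cauchy estimate for $\{u_n\}$. Fix indices $m\le n$ and apply property (ii) of the metric projection with $x=x_n$ and $y=u_m\in C$:
\[
\|x_n-u_m\|^2\ge\|x_n-P_C x_n\|^2+\|u_m-P_C x_n\|^2=d_n^2+\|u_m-u_n\|^2,
\]
so that $\|u_n-u_m\|^2\le\|x_n-u_m\|^2-d_n^2$. To bound the first term I iterate the hypothesis with the fixed anchor $z=u_m\in C$, stepping down from index $n$ to index $m$, which yields $\|x_n-u_m\|\le\|x_{n-1}-u_m\|\le\cdots\le\|x_m-u_m\|=d_m$. Combining these gives $\|u_n-u_m\|^2\le d_m^2-d_n^2$ for all $m\le n$.

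Since $\{d_n\}$ converges, the right-hand side $d_m^2-d_n^2$ tends to $0$ as $m,n\to\infty$, so $\{u_n\}=\{P_C x_n\}$ is Cauchy and therefore converges strongly to some $u\in C$ by completeness of $C$. I expect the only genuine obstacle to be recognizing the correct test point in property (ii): one must insert the \emph{earlier} projection $u_m$ as $y$ so that the projection inequality peels off the term $d_n$, and then exploit the monotonicity hypothesis anchored at that same $u_m$ to dominate $\|x_n-u_m\|$ by $d_m$. Once this pairing is spotted, the remainder is a routine telescoping squeeze.
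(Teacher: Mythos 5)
Your proof is correct: the monotonicity of $d_n=\|x_n-P_Cx_n\|$, the projection inequality $\|u_n-u_m\|^2\le\|x_n-u_m\|^2-d_n^2$, and the telescoped bound $\|x_n-u_m\|\le d_m$ together give the Cauchy estimate $\|u_n-u_m\|^2\le d_m^2-d_n^2$, which is exactly the standard argument. The paper itself offers no proof (it merely cites Takahashi and Toyoda), and your argument coincides with the one in that reference, so there is nothing to add.
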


\begin{lemma}
\cite{tato} Let $C$ be a nonempty closed convex subset of a real Hilbert
space $H$ and let $A$ be an $\alpha $-inverse strongly monotone mapping of $C
$ into $H.$ Then, the solution of $VI\left( C,A\right) $, $\Omega ,$ is
nonempty.
\end{lemma}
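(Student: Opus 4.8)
The plan is to convert the existence question for $\Omega$ into a fixed point problem and then invoke a fixed point theorem for nonexpansive maps. The excerpt already records the equivalence $x\in\Omega \Leftrightarrow x\in F\!\left(P_{C}(I-\lambda A)\right)$, valid for every $\lambda>0$. So I would fix any $\lambda\in(0,2\alpha)$, set $T_{\lambda}:=P_{C}(I-\lambda A):C\to C$, and observe that it suffices to produce a single fixed point of $T_{\lambda}$, since $F(T_{\lambda})=\Omega$ by that equivalence.

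The key step is to show that $T_{\lambda}$ is nonexpansive for $\lambda\in(0,2\alpha)$. Since $P_{C}$ is nonexpansive, it is enough to estimate $\|(I-\lambda A)x-(I-\lambda A)y\|$. Expanding, for $x,y\in C$,
\begin{equation*}
\|(x-y)-\lambda(Ax-Ay)\|^{2}=\|x-y\|^{2}-2\lambda\langle Ax-Ay,x-y\rangle+\lambda^{2}\|Ax-Ay\|^{2}.
\end{equation*}
Applying the $\alpha$-inverse strong monotonicity inequality $\langle Ax-Ay,x-y\rangle\geq\alpha\|Ax-Ay\|^{2}$ yields the bound $\|x-y\|^{2}-\lambda(2\alpha-\lambda)\|Ax-Ay\|^{2}$, and since $\lambda(2\alpha-\lambda)\ge 0$ for $\lambda\in(0,2\alpha)$ this is at most $\|x-y\|^{2}$. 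Hence $\|T_{\lambda}x-T_{\lambda}y\|\le\|x-y\|$, so $T_{\lambda}$ is a nonexpansive self-map of $C$.

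Finally I would conclude that $F(T_{\lambda})\neq\emptyset$, and here lies the genuine obstacle: a nonexpansive self-map of an \emph{unbounded} closed convex set need not have a fixed point (translations on $H$ are the standard caution). The clean route, and the one I expect the cited source to intend, is to work on a bounded $C$, whereupon the Browder--G\"{o}hde--Kirk fixed point theorem---applicable because a Hilbert space is uniformly convex---guarantees a fixed point of the nonexpansive map $T_{\lambda}$. With such a point in hand, the equivalence from the first paragraph immediately produces a solution of $VI(C,A)$, i.e. $\Omega\neq\emptyset$. If one does not wish to assume boundedness, one must instead impose a coercivity or asymptotic condition on $A$ to preclude the escaping-to-infinity behaviour; identifying precisely which hypothesis renders the bare statement correct is, in my view, the delicate point that the plain wording here glosses over.
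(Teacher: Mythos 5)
The paper offers no proof of this lemma at all: it is quoted verbatim from Takahashi--Toyoda \cite{tato}, so there is nothing internal to compare your argument against. That said, your route is exactly the standard one (and the one used in the cited source): for $\lambda\in(0,2\alpha)$ the expansion $\Vert (I-\lambda A)x-(I-\lambda A)y\Vert^{2}\leq\Vert x-y\Vert^{2}-\lambda(2\alpha-\lambda)\Vert Ax-Ay\Vert^{2}$ shows $P_{C}(I-\lambda A)$ is a nonexpansive self-map of $C$, and $F\left(P_{C}(I-\lambda A)\right)=\Omega$, so existence reduces to Browder--G\"{o}hde--Kirk. This computation is correct and is in fact reproduced inside the paper's Step 1 of Theorem \ref{1*} for a different purpose.

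Your reservation about unboundedness is not merely a quibble; it is the decisive point, and you are right. As transcribed here the lemma is false: take $C=H$ and $Ax\equiv a$ for a fixed $a\neq 0$. Then $\langle Ax-Ay,x-y\rangle=0=\alpha\Vert Ax-Ay\Vert^{2}$, so $A$ is $\alpha$-inverse strongly monotone for every $\alpha>0$, yet choosing $y=x-a$ in $\langle Ax,y-x\rangle\geq 0$ gives $-\Vert a\Vert^{2}\geq 0$, so $\Omega=\emptyset$. The original result in \cite{tato} carries the hypothesis that $C$ is \emph{bounded}, which the present paper has dropped in restating it; with that hypothesis restored your proof is complete, since Browder--G\"{o}hde--Kirk applies to a nonexpansive self-map of a bounded closed convex subset of a Hilbert space. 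So the gap you identified lies in the paper's statement, not in your argument.
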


For a set-valued mapping $S:H\rightarrow 2^{H}$, if the inequality%
\begin{equation*}
\left\langle f-g,u-v\right\rangle \geq 0
\end{equation*}%
holds for all $u,v\in C,f\in Su,g\in Sv,$ then $S$ is called monotone
mapping. A monotone mapping $S:H\rightarrow 2^{H}$ is maximal if the graph $%
G\left( S\right) $ of $S$ is not properly contained in the graph of any
other monotone mappings. It is known that a monotone mapping $S$ is maximal
if and only if, for $\left( u,f\right) \in H\times H,$ $\left\langle
u-v,f-w\right\rangle \geq 0$ for every $\left( v,w\right) \in G\left(
S\right) $ implies $f\in Su.$ Let $A$ be an inverse strongly monotone
mapping of $C$ into $H,$ let $N_{C}v$ be the normal cone to $C$ at $v\in C,$
i.e.,%
\begin{equation*}
N_{C}v=\left\{ w\in H:\left\langle v-u,w\right\rangle \geq 0,\forall u\in
C\right\} ,
\end{equation*}%
and define%
\begin{equation*}
Sv=\left\{ 
\begin{array}{cc}
Av+N_{C}v & v\in C \\ 
\emptyset  & v\notin C.%
\end{array}%
\right. 
\end{equation*}%
Then, $S$ is maximal monotone and $0\in Sv$ if and only if $v\in \Omega .$

\begin{lemma}
\label{b}\cite{kirk} Let $C$ be a nonempty closed convex subset of a real
Hilbert space $H,$ and $T$ be a nonexpansive self-mapping on $C.$ If $%
F\left( T\right) \neq \emptyset ,$ then $I-T$ is demiclosed; that is
whenever $\left\{ x_{n}\right\} $ is a sequence in $C$ weakly converging to
some $x\in C$ and the sequence $\left\{ \left( I-T\right) x_{n}\right\} $
strongly converges to some $y$, it follows that $\left( I-T\right) x=y.$
Here, $I$ is the identity operator of $H.$
\end{lemma}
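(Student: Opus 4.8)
The plan is to deduce the demiclosedness of $I-T$ directly from Opial's condition, which $H$ is noted to satisfy. So suppose $\{x_n\}\subset C$ satisfies $x_n\rightharpoonup x\in C$ and $(I-T)x_n\to y$ strongly; I must show $(I-T)x=y$, equivalently $Tx=x-y$. Setting $z:=x-y$, the goal is to prove $Tx=z$.

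First I would identify the weak limit of the image sequence. Since $Tx_n=x_n-(I-T)x_n$ is a difference of a weakly convergent sequence and a strongly convergent one, it follows that $Tx_n\rightharpoonup x-y=z$. Next, nonexpansiveness gives the pointwise bound $\|Tx_n-Tx\|\le\|x_n-x\|$ for every $n$, whence $\liminf_n\|Tx_n-Tx\|\le\liminf_n\|x_n-x\|$. Finally, from the identity $Tx_n-z=(x_n-x)-((I-T)x_n-y)$ together with $(I-T)x_n-y\to 0$, one obtains $\bigl|\,\|Tx_n-z\|-\|x_n-x\|\,\bigr|\to 0$, so that $\liminf_n\|Tx_n-z\|=\liminf_n\|x_n-x\|$.

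The crux is then an Opial argument by contradiction. Assume $Tx\ne z$. Applying Opial's condition to the sequence $\{Tx_n\}$, which converges weakly to $z$, against the distinct point $Tx$, yields $\liminf_n\|Tx_n-z\|<\liminf_n\|Tx_n-Tx\|$. Chaining the three relations above then produces
\[
\liminf_n\|x_n-x\|=\liminf_n\|Tx_n-z\|<\liminf_n\|Tx_n-Tx\|\le\liminf_n\|x_n-x\|,
\]
a strict inequality of a quantity with itself, which is absurd. Hence $Tx=z$, i.e.\ $(I-T)x=y$, as claimed.

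I expect the only delicate point to be the bookkeeping of the two $\liminf$ quantities, arranged so that Opial's strict inequality collapses onto a single term; each individual estimate is elementary. It is worth remarking that this route never invokes $F(T)\ne\emptyset$, so that hypothesis, although convenient when the lemma is later applied to the mappings $W_n$, is not actually needed for demiclosedness itself.
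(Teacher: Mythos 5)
Your proof is correct, and it is the standard Opial-based argument for the demiclosedness principle; the paper itself offers no proof, simply citing Goebel--Kirk, so there is nothing to diverge from. All three estimates check out: the weak limit identification $Tx_n\rightharpoonup x-y$, the nonexpansiveness bound, and the norm comparison $\bigl|\,\Vert Tx_n-(x-y)\Vert-\Vert x_n-x\Vert\,\bigr|\le\Vert (I-T)x_n-y\Vert\to 0$, after which Opial's strict inequality (recorded in the paper's preliminaries for exactly this purpose) forces the contradiction. Your closing observation is also accurate: the hypothesis $F(T)\neq\emptyset$ plays no role in the argument, and the demiclosedness principle holds without it; the paper presumably carries that hypothesis only because it is available where the lemma is invoked (for $W$ with $F(W)=\bigcap_n F(T_n)\neq\emptyset$).
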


\begin{lemma}
\label{a}\cite{schu} Let $H$ be a real Hilbert space, let $\left\{ \alpha
_{n}\right\} $ be a sequence of real numbers such that $0<a\leq \alpha
_{n}\leq b<1$ for all $n=0,1,2,\ldots ,$ and let $\left\{ x_{n}\right\} $
and $\left\{ y_{n}\right\} $ be sequences of $H$ such that%
\begin{equation*}
\limsup_{n\rightarrow \infty }\left\Vert x_{n}\right\Vert \leq c,\text{ }%
\limsup_{n\rightarrow \infty }\left\Vert y_{n}\right\Vert \leq c\text{ and }%
\lim_{n\rightarrow \infty }\left\Vert \alpha _{n}x_{n}+\left( 1-\alpha
_{n}\right) y_{n}\right\Vert =c,
\end{equation*}%
\ for some $c>0.$ Then,%
\begin{equation*}
\lim_{n\rightarrow \infty }\left\Vert x_{n}-y_{n}\right\Vert =0.
\end{equation*}
\end{lemma}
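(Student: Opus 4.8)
The plan is to reduce the whole statement to the elementary Hilbert-space identity for convex combinations, namely
\begin{equation*}
\left\Vert \alpha u+\left( 1-\alpha \right) v\right\Vert ^{2}=\alpha \left\Vert u\right\Vert ^{2}+\left( 1-\alpha \right) \left\Vert v\right\Vert ^{2}-\alpha \left( 1-\alpha \right) \left\Vert u-v\right\Vert ^{2},
\end{equation*}
valid for all $u,v\in H$ and every $\alpha \in \mathbb{R}$, which follows immediately by expanding both sides with the inner product. Applying it with $u=x_{n}$, $v=y_{n}$ and $\alpha =\alpha _{n}$ and then solving for the last term gives
\begin{equation*}
\alpha _{n}\left( 1-\alpha _{n}\right) \left\Vert x_{n}-y_{n}\right\Vert ^{2}=\alpha _{n}\left\Vert x_{n}\right\Vert ^{2}+\left( 1-\alpha _{n}\right) \left\Vert y_{n}\right\Vert ^{2}-\left\Vert \alpha _{n}x_{n}+\left( 1-\alpha _{n}\right) y_{n}\right\Vert ^{2}.
\end{equation*}
The entire lemma then comes down to showing that the right-hand side has nonpositive $\limsup$.

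Next I would pass to $\limsup_{n\to\infty}$ on both sides. The subtracted term is immediate: by hypothesis $\left\Vert \alpha _{n}x_{n}+\left( 1-\alpha _{n}\right) y_{n}\right\Vert ^{2}\to c^{2}$. The first two terms form a convex combination of $\left\Vert x_{n}\right\Vert ^{2}$ and $\left\Vert y_{n}\right\Vert ^{2}$, and the point requiring care is that $\{\alpha _{n}\}$ need not converge, so one cannot simply factor out a limit of $\alpha _{n}$. Instead I would fix $\varepsilon >0$ and choose $N$ so that $\left\Vert x_{n}\right\Vert ^{2}\le c^{2}+\varepsilon $ and $\left\Vert y_{n}\right\Vert ^{2}\le c^{2}+\varepsilon $ for all $n\ge N$, which is possible because both limsups are at most $c$. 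Since the coefficients $\alpha _{n}$ and $1-\alpha _{n}$ are nonnegative and sum to $1$, it follows that $\alpha _{n}\left\Vert x_{n}\right\Vert ^{2}+\left( 1-\alpha _{n}\right) \left\Vert y_{n}\right\Vert ^{2}\le c^{2}+\varepsilon $ for $n\ge N$, and hence $\limsup_{n}[\alpha _{n}\left\Vert x_{n}\right\Vert ^{2}+(1-\alpha _{n})\left\Vert y_{n}\right\Vert ^{2}]\le c^{2}$. Combining the two estimates yields
\begin{equation*}
\limsup_{n\to\infty}\alpha _{n}\left( 1-\alpha _{n}\right) \left\Vert x_{n}-y_{n}\right\Vert ^{2}\le c^{2}-c^{2}=0.
\end{equation*}

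Finally, the left-hand side is nonnegative, so in fact $\lim_{n\to\infty}\alpha _{n}\left( 1-\alpha _{n}\right) \left\Vert x_{n}-y_{n}\right\Vert ^{2}=0$. Here the hypothesis $0<a\le \alpha _{n}\le b<1$ is exactly what rescues the conclusion: it provides the uniform lower bound $\alpha _{n}\left( 1-\alpha _{n}\right) \ge a\left( 1-b\right) >0$, so that
\begin{equation*}
a\left( 1-b\right) \left\Vert x_{n}-y_{n}\right\Vert ^{2}\le \alpha _{n}\left( 1-\alpha _{n}\right) \left\Vert x_{n}-y_{n}\right\Vert ^{2}\to 0,
\end{equation*}
and therefore $\left\Vert x_{n}-y_{n}\right\Vert \to 0$, as desired. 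I expect the only genuinely delicate point to be the $\limsup$ bookkeeping in the middle step, specifically handling the varying and possibly non-convergent coefficients $\alpha _{n}$ in the convex combination without inadvertently assuming convergence; the rest is the convex-combination identity together with the endpoint bound on $\alpha _{n}\left( 1-\alpha _{n}\right) $.
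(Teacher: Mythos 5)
Your argument is correct. Note that the paper itself gives no proof of this lemma --- it is quoted from Schu's paper --- so there is nothing internal to compare against; judged on its own, your proof is complete and is the standard Hilbert-space argument. The key identity
\begin{equation*}
\left\Vert \alpha u+\left( 1-\alpha \right) v\right\Vert ^{2}=\alpha \left\Vert u\right\Vert ^{2}+\left( 1-\alpha \right) \left\Vert v\right\Vert ^{2}-\alpha \left( 1-\alpha \right) \left\Vert u-v\right\Vert ^{2}
\end{equation*}
is verified by direct expansion, your $\varepsilon$-argument correctly handles the possibly non-convergent coefficients $\alpha_{n}$ in bounding $\limsup_{n}\left[ \alpha _{n}\left\Vert x_{n}\right\Vert ^{2}+\left( 1-\alpha _{n}\right) \left\Vert y_{n}\right\Vert ^{2}\right] \leq c^{2}$, and the passage to the limit of the difference is legitimate because the subtracted term genuinely converges to $c^{2}$ rather than merely having a $\limsup$. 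The uniform bound $\alpha _{n}\left( 1-\alpha _{n}\right) \geq a\left( 1-b\right) >0$ then finishes the proof exactly as you say. It is worth remarking that Schu's original lemma is stated in uniformly convex Banach spaces, where the proof must go through the modulus of convexity; your route exploits the Hilbert-space identity to get a much shorter and fully self-contained argument, which is entirely adequate for the setting of this paper.
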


\begin{lemma}
\label{Y}\cite{xu} Assume that $\left\{ x_{n}\right\} $ is a sequence of
nonnegative real numbers satisfying the conditions%
\begin{equation*}
x_{n+1}\leq \left( 1-\alpha _{n}\right) x_{n}+\alpha _{n}\beta _{n},\text{ }%
\forall n\geq 0
\end{equation*}%
where $\left\{ \alpha _{n}\right\} $ \ and $\left\{ \beta _{n}\right\} $ are
sequences of real numbers such that%
\begin{eqnarray*}
&\text{(i) }&\left\{ \alpha _{n}\right\} \subset \left[ 0,1\right] \text{
and }\tsum_{n=0}^{\infty }\alpha _{n}=\infty ,\text{ or equivalently }%
\tprod_{n=0}^{\infty }\left( 1-\alpha _{n}\right) =0\text{, \ \ \ \ } \\
&\text{(ii)}&\limsup_{n\rightarrow \infty }\beta _{n}\leq 0\text{, or }%
\tsum_{n}\alpha _{n}\beta _{n}<\infty \text{.}
\end{eqnarray*}%
\textit{Then,} $\lim_{n\rightarrow \infty }x_{n}=0.$
\end{lemma}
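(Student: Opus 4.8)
The plan is to reduce everything to showing $\limsup_{n\to\infty} x_n \le 0$: since every $x_n \ge 0$ forces $\liminf_{n\to\infty} x_n \ge 0$, the two inequalities together give $\lim_{n\to\infty} x_n = 0$. The one structural fact I would lean on repeatedly is the equivalence inside hypothesis (i): taking logarithms and using $\ln(1-\alpha_n) \le -\alpha_n$ shows $\sum_k \alpha_k = \infty \Rightarrow \prod_k(1-\alpha_k) = 0$, with the converse equally standard. Hence for any starting index $N$ I may treat $\prod_{k=N}^{\infty}(1-\alpha_k) = 0$ as given, and the whole argument amounts to exploiting this against the two alternatives in (ii).

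For the first alternative, $\limsup_{n\to\infty}\beta_n \le 0$, I would fix $\varepsilon > 0$, pick $N$ with $\beta_n \le \varepsilon$ for all $n \ge N$, and rewrite the recursion for $n \ge N$ as
\begin{equation*}
x_{n+1} - \varepsilon \le (1-\alpha_n)(x_n - \varepsilon).
\end{equation*}
Because $1-\alpha_n \ge 0$, iterating this preserves the inequality and yields
\begin{equation*}
x_{n+1} - \varepsilon \le \Bigl(\prod_{k=N}^{n}(1-\alpha_k)\Bigr)(x_N - \varepsilon).
\end{equation*}
The product tends to $0$, so the right side tends to $0$ regardless of the sign of $x_N - \varepsilon$ (the product stays in $[0,1]$), giving $\limsup_{n\to\infty} x_n \le \varepsilon$; letting $\varepsilon \downarrow 0$ finishes this case.

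For the second alternative, $\sum_n \alpha_n\beta_n < \infty$, I would instead unroll the recursion all the way to the closed bound
\begin{equation*}
x_{n+1} \le \Bigl(\prod_{k=0}^{n}(1-\alpha_k)\Bigr)x_0 + \sum_{j=0}^{n}\alpha_j\beta_j\prod_{k=j+1}^{n}(1-\alpha_k).
\end{equation*}
The leading term vanishes by (i). For the perturbation sum I would split at an index $M$ chosen so that $\sum_{j>M}|\alpha_j\beta_j| < \varepsilon$: the tail is bounded in absolute value by $\varepsilon$ since each partial product is $\le 1$, while the head is a fixed finite sum in which every product $\to 0$ as $n\to\infty$. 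Thus the $\limsup$ of the perturbation is at most $\varepsilon$, and $\varepsilon \downarrow 0$ gives $x_n \to 0$.

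The two $\varepsilon$-passages and the product bookkeeping are mechanical, so I expect no serious obstacle. The only points that genuinely need care are the product-to-zero equivalence used to kill the leading factors, and, in the first case, confirming that the sign of $x_N - \varepsilon$ does not spoil the limit. Everything else is a soft consequence of $\prod(1-\alpha_k)=0$ combined with the standard head/tail splitting of a summable perturbation.
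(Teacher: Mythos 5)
The paper does not prove this lemma at all: it is quoted verbatim from the reference \cite{xu} (Xu and Kim) and used as a black box in Step 5 of Theorem \ref{1*}, so there is no in-paper proof to compare against. Your argument is a correct, self-contained reconstruction of the standard proof. The first alternative is handled exactly as in the classical argument: the shift $x_n\mapsto x_n-\varepsilon$ turns the recursion into a pure contraction by the factors $1-\alpha_k$, and your parenthetical about the sign of $x_N-\varepsilon$ is the right point to flag --- since the partial products lie in $[0,1]$ and tend to $0$, the bound $\bigl(\prod_{k=N}^{n}(1-\alpha_k)\bigr)(x_N-\varepsilon)\to 0$ holds whether $x_N-\varepsilon$ is positive or negative, so $\limsup x_n\le\varepsilon$ in either case. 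The second alternative via the fully unrolled variation-of-constants bound and a head/tail split is also correct, with one caveat worth making explicit: your tail estimate $\sum_{j>M}\lvert\alpha_j\beta_j\rvert<\varepsilon$ silently reads the hypothesis $\sum_n\alpha_n\beta_n<\infty$ as \emph{absolute} convergence. That is the form in which Xu and Kim actually state the condition (with $\sum\alpha_n\lvert\beta_n\rvert<\infty$), so your reading is the intended one, but if one insisted on mere convergence of the signed series the head/tail bound would need to be replaced by an Abel-summation estimate exploiting that, for fixed $n$, the weights $\prod_{k=j+1}^{n}(1-\alpha_k)$ are monotone in $j$ and bounded by $1$. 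Your derivation of the product-to-zero equivalence from $\ln(1-\alpha_n)\le-\alpha_n$ is standard and fine.
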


Concerning the mapping $W_{n}$ defined by (\ref{11}), we have the following
lemmas in a real Hilbert space which can be obtained from Shimoji and
Takahashi \cite{shi}.

\begin{lemma}
\label{AA}\cite{shi} Let $C$ be a nonempty closed and convex subset of a
real Hilbert space $H$. Let $\left\{ T_{n}\right\} $ be an infinite family
of nonexpansive mappings on $C$ such that $\tbigcap_{n=1}^{\infty }F\left(
T_{n}\right) \ $is nonempty, and let $\mu _{1},\mu _{2},\ldots $ be real
numbers such that $0\leq \mu _{n}\leq 1$ for all $n\in 
\mathbb{N}
$. Then, for every $x\in C$ and $k\in 
\mathbb{N}
$, the limit $\lim_{n\rightarrow \infty }U_{n,k}x$\ exists.
\end{lemma}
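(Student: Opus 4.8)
The plan is to fix $k\in\mathbb{N}$ and $x\in C$ and prove that the sequence $\left\{ U_{n,k}x\right\} _{n\geq k}$ is Cauchy in $H$; since $H$ is complete and $C$ is closed and convex, the limit $\lim_{n\rightarrow \infty }U_{n,k}x$ then exists and belongs to $C$. To set up the estimates, I would first fix a point $u\in \bigcap_{n=1}^{\infty }F\left( T_{n}\right) $, which is available by hypothesis. A downward induction on the index using the defining recursion $U_{n,k}=\mu _{k}T_{k}U_{n,k+1}+\left( 1-\mu _{k}\right) I$ shows that $U_{n,k}u=u$ for every admissible pair $(n,k)$: indeed $U_{n,n+1}u=u$, and if $U_{n,k+1}u=u$ then $U_{n,k}u=\mu _{k}T_{k}u+\left( 1-\mu _{k}\right) u=u$ because $T_{k}u=u$. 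Consequently each $U_{n,k}$ is a nonexpansive map fixing $u$, so $\left\{ U_{n,k}x\right\} $ is bounded and, in particular, $\left\Vert T_{n+1}x-x\right\Vert \leq \left\Vert T_{n+1}x-u\right\Vert +\left\Vert u-x\right\Vert \leq 2\left\Vert x-u\right\Vert =:M$.

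The key step is a telescoping contraction estimate for the consecutive differences. Comparing the recursions for $U_{n+1,k}$ and $U_{n,k}$ at the same argument $x$, the terms $\left( 1-\mu _{k}\right) x$ cancel, and nonexpansivity of $T_{k}$ yields
\begin{equation*}
\left\Vert U_{n+1,k}x-U_{n,k}x\right\Vert =\mu _{k}\left\Vert T_{k}U_{n+1,k+1}x-T_{k}U_{n,k+1}x\right\Vert \leq \mu _{k}\left\Vert U_{n+1,k+1}x-U_{n,k+1}x\right\Vert .
\end{equation*}
Iterating this inequality from level $k$ up to level $n$, and then evaluating the base difference $\left\Vert U_{n+1,n+1}x-U_{n,n+1}x\right\Vert =\mu _{n+1}\left\Vert T_{n+1}x-x\right\Vert $ (here $U_{n,n+1}x=x$ while $U_{n+1,n+1}x=\mu _{n+1}T_{n+1}x+\left( 1-\mu _{n+1}\right) x$), I obtain
\begin{equation*}
\left\Vert U_{n+1,k}x-U_{n,k}x\right\Vert \leq \left( \prod_{j=k}^{n+1}\mu _{j}\right) \left\Vert T_{n+1}x-x\right\Vert \leq M\prod_{j=k}^{n+1}\mu _{j}.
\end{equation*}

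Finally, I would sum these bounds to control $\left\Vert U_{m,k}x-U_{n,k}x\right\Vert $ for $m>n$ by $M\sum_{i\geq n}\prod_{j=k}^{i+1}\mu _{j}$, so that Cauchyness reduces to convergence of the series $\sum_{i}\prod_{j=k}^{i+1}\mu _{j}$. This is exactly where a restriction on the parameters is indispensable: if $\mu _{j}\leq b<1$ for all $j$, then $\prod_{j=k}^{i+1}\mu _{j}\leq b^{\,i+2-k}$ decays geometrically, the series converges, and $\left\{ U_{n,k}x\right\} $ is Cauchy as desired. I regard this geometric decay as the main obstacle, in the sense that the bare bound $0\leq \mu _{n}\leq 1$ is not by itself enough (take $\mu _{n}\equiv 1$, where $U_{n,1}=T_{1}T_{2}\cdots T_{n}$ need not converge); the statement should be read with $\mu _{n}$ bounded away from $1$, as in Shimoji and Takahashi \cite{shi}, which is what renders the product negligible and delivers the limit.
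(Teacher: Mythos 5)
Your proof is correct and is essentially the standard argument of the cited source (Shimoji--Takahashi): the paper itself offers no proof of this lemma, and your telescoping estimate $\left\Vert U_{n+1,k}x-U_{n,k}x\right\Vert \leq M\prod_{j=k}^{n+1}\mu _{j}$ followed by summation is exactly how the reference establishes it. You are also right that the bare hypothesis $0\leq \mu _{n}\leq 1$ is insufficient and that $\mu _{n}\leq b<1$ is needed for the geometric decay; the paper concedes this implicitly by declaring, immediately after the lemma, that throughout it assumes $0<\mu _{n}\leq b<1$.
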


By using the Lemma \ref{AA}, one can define the mapping $W$ on $C$ as
follows:%
\begin{equation*}
Wx=\lim_{n\rightarrow \infty }W_{n}x=\lim_{n\rightarrow \infty }U_{n,1}x,%
\text{ }\forall x\in H.
\end{equation*}%
Such a $W$ is called the $W$-mapping generated by $T_{1},T_{2},\ldots $ and $%
\mu _{1},\mu _{2},\ldots $. Throughout this paper, we assume that $0<\mu
_{n}\leq b<1$ for $n\geq 0$.

\begin{lemma}
\label{e3}\cite{shi} Let $C$ be a nonempty closed and convex subset of a
real Hilbert space $H$. Let $\left\{ T_{n}\right\} $ be an infinite family
of nonexpansive mappings on $C$ such that $\tbigcap_{n=1}^{\infty }F\left(
T_{n}\right) \ $is nonempty, and let $\mu _{1},\mu _{2},\ldots $ be real
numbers such that $0\leq \mu _{n}\leq 1$ for $n\geq 0$. Then, $F\left(
W\right) =\tbigcap_{n=1}^{\infty }F\left( T_{n}\right) $.
\end{lemma}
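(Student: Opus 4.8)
The plan is to establish the two inclusions $\bigcap_{n=1}^{\infty}F(T_n)\subseteq F(W)$ and $F(W)\subseteq\bigcap_{n=1}^{\infty}F(T_n)$ separately. The first is the routine one. If $x\in\bigcap_{n=1}^{\infty}F(T_n)$, then $T_kx=x$ for every $k$, and I would run a backward induction on $k$ (starting from $U_{n,n+1}=I$) using the defining relation $U_{n,k}=\mu_kT_kU_{n,k+1}+(1-\mu_k)I$ to conclude $U_{n,k}x=x$ for all $k\le n+1$; in particular $W_nx=U_{n,1}x=x$, so $Wx=\lim_nW_nx=x$ and $x\in F(W)$. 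The same induction shows that any $p\in\bigcap_{n=1}^{\infty}F(T_n)$ is fixed by every $U_{n,k}$, a fact I would use repeatedly in the harder direction.

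For the reverse inclusion I would fix $x\in F(W)$ and a point $p\in\bigcap_{n=1}^{\infty}F(T_n)$ (nonempty by hypothesis), and set $R=\|x-p\|$. Since each $U_{n,k}$ is nonexpansive and fixes $p$, one has $\|U_{n,k+1}x-p\|\le R$. Applying the Hilbert-space identity $\|\mu a+(1-\mu)b\|^2=\mu\|a\|^2+(1-\mu)\|b\|^2-\mu(1-\mu)\|a-b\|^2$ to $U_{n,k}x-p=\mu_k(T_kU_{n,k+1}x-p)+(1-\mu_k)(x-p)$, together with $\|T_kU_{n,k+1}x-p\|\le\|U_{n,k+1}x-p\|$, yields for $d_k:=\|U_{n,k}x-p\|^2$ the estimate
\begin{equation*}
d_k\le\mu_kd_{k+1}+(1-\mu_k)R^2-\mu_k(1-\mu_k)\|T_kU_{n,k+1}x-x\|^2 .
\end{equation*}
Writing $e_k:=R^2-d_k\ge 0$ and rearranging gives $e_k\ge\mu_ke_{k+1}+\mu_k(1-\mu_k)\|T_kU_{n,k+1}x-x\|^2$; iterating this relation down the recursion and discarding nonnegative terms produces, for each fixed $k$ and all $n\ge k$,
\begin{equation*}
e_1\ge\Big(\prod_{j=1}^{k-1}\mu_j\Big)\mu_k(1-\mu_k)\,\|T_kU_{n,k+1}x-x\|^2 .
\end{equation*}

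Next I would let $n\to\infty$. Because $x\in F(W)$, we have $W_nx\to Wx=x$, so $e_1=R^2-\|W_nx-p\|^2\to 0$. As the coefficient $\big(\prod_{j=1}^{k-1}\mu_j\big)\mu_k(1-\mu_k)$ is a strictly positive constant independent of $n$ (this uses $0<\mu_n\le b<1$), the last inequality forces $\|T_kU_{n,k+1}x-x\|\to 0$ for every fixed $k$. By Lemma \ref{AA} the limit $U_{k+1}x:=\lim_nU_{n,k+1}x$ exists, and continuity of $T_k$ gives $T_kU_{k+1}x=\lim_nT_kU_{n,k+1}x=x$. Passing to the limit in $U_{n,k}x=\mu_kT_kU_{n,k+1}x+(1-\mu_k)x$ then yields $U_kx=\mu_kx+(1-\mu_k)x=x$ for every $k$; feeding $U_{k+1}x=x$ back into $T_kU_{k+1}x=x$ gives $T_kx=x$ for all $k$, so $x\in\bigcap_{n=1}^{\infty}F(T_n)$.

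The main obstacle is precisely this reverse inclusion: the hypothesis $Wx=x$ directly constrains only the outermost layer $k=1$, whereas the conclusion requires that the defect $\|T_kU_{n,k+1}x-x\|$ vanish at every depth $k$. The telescoped defect inequality is the device that propagates $Wx=x$ through all inner layers at once, and the uniform bound $0<\mu_n\le b<1$ is essential, since it keeps each product $\prod_{j=1}^{k-1}\mu_j\cdot\mu_k(1-\mu_k)$ bounded away from zero so that the vanishing of $e_1$ can be transferred to each individual defect.
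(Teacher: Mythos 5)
The paper does not prove this lemma at all: it is quoted verbatim from Shimoji and Takahashi \cite{shi}, so there is no in-paper argument to compare against. Your proof is correct and self-contained, and it is essentially the standard argument for this result, with one presentational difference worth noting: the original source works in a strictly convex Banach space and extracts the reverse inclusion from strict convexity of the norm, whereas you use the Hilbert-space identity $\|\mu a+(1-\mu)b\|^{2}=\mu\|a\|^{2}+(1-\mu)\|b\|^{2}-\mu(1-\mu)\|a-b\|^{2}$, which is the quantitative form of strict convexity; this buys you an explicit defect inequality $e_{1}\geq\bigl(\prod_{j=1}^{k-1}\mu_{j}\bigr)\mu_{k}(1-\mu_{k})\|T_{k}U_{n,k+1}x-x\|^{2}$ that propagates $W_{n}x\to x$ down to every layer $k$, at the cost of being Hilbert-specific (which is all the present paper needs). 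Both inclusions check out: the backward induction for $\bigcap_{n}F(T_{n})\subseteq F(W)$ is routine, and in the reverse direction the passage to the limit via Lemma \ref{AA} and the continuity of $T_{k}$ correctly yields $V_{k}x:=\lim_{n}U_{n,k}x=x$ for all $k$ and hence $T_{k}x=x$. You are also right to lean on the standing assumption $0<\mu_{n}\leq b<1$ rather than the weaker $0\leq\mu_{n}\leq1$ printed in the lemma statement: the result is simply false under the printed hypothesis (take all $\mu_{n}=0$, so $W=I$ and $F(W)=C$), and your explicit remark that the positivity of $\prod_{j=1}^{k-1}\mu_{j}\cdot\mu_{k}(1-\mu_{k})$ is what transfers $e_{1}\to0$ to each individual defect correctly identifies where that hypothesis is indispensable. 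The only cosmetic issue is the reuse of the symbol $U_{k+1}$ for $\lim_{n}U_{n,k+1}$, which clashes with the double-indexed notation; a fresh symbol would be cleaner.
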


\section{Main result}

Now, we are in a position to state and prove the main result in this paper.

\begin{theorem}
\label{1*}Let $C$ be a nonempty closed convex subset of a real Hilbert space 
$H$, let $A:C\rightarrow H$ be an $\alpha $-inverse strongly monotone
mapping and let $\left\{ T_{n}\right\} $ be an infinite family of
nonexpansive self-mappings on $C$ such that $\tciFourier
:=\tbigcap_{n=0}^{\infty }F\left( T_{n}\right) \cap \Omega \neq \emptyset .$
Let $\left\{ x_{n}\right\} $ be a sequence defined by (\ref{12}), where $%
\{\lambda _{n}\}\subset \lbrack a,b]$ for some $a,b\in (0,2\alpha )$ and $%
\left\{ \alpha _{n}\right\} \subset \left[ c,d\right] $ for some $c,d\in
\left( 0,1\right) $. Then, the sequence $\left\{ x_{n}\right\} $ converges
strongly to a point $z\in \tciFourier $ where $z$ is the unique solution of
the variational inequality (\ref{25}).
\end{theorem}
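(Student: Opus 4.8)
The plan is to run the classical Fej\'er-monotonicity and demiclosedness scheme, adapted to the two composite nonexpansive operators appearing in (\ref{12}). Write $t_n:=P_C(I-\lambda_n A)x_n$. First I would record that for $\lambda\in(0,2\alpha)$ the operator $I-\lambda A$ is nonexpansive: expanding $\|(I-\lambda A)x-(I-\lambda A)y\|^2$ and using $\alpha$-inverse strong monotonicity produces the extra term $\lambda(\lambda-2\alpha)\|Ax-Ay\|^2\le0$. Since $P_C$ and each $W_n$ are nonexpansive and fix every $p\in\tciFourier$ (indeed $p\in\Omega$ forces $P_C(I-\lambda_n A)p=p$, and $p\in\bigcap_n F(T_n)$ forces $W_np=p$), a routine convexity estimate gives $\|y_n-p\|\le\|x_n-p\|$ and hence $\|x_{n+1}-p\|\le\|y_n-p\|\le\|x_n-p\|$. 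Thus $\{\|x_n-p\|\}$ is nonincreasing and convergent, $\{x_n\}$ is bounded, and $\tciFourier$ is closed and convex.

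Next I would extract the vanishing residuals. Retaining the term $\lambda_n(\lambda_n-2\alpha)\|Ax_n-Ap\|^2$ in the squared version of the previous estimate and telescoping against the convergence of $\|x_n-p\|^2$ (using $\lambda_n\in[a,b]\subset(0,2\alpha)$ and $\alpha_n\ge c$) yields $\|Ax_n-Ap\|\to0$ and $\|Ay_n-Ap\|\to0$. The firm nonexpansiveness of $P_C$ gives the sharper bound $\|t_n-p\|^2\le\|x_n-p\|^2-\|(t_n-x_n)+\lambda_n(Ax_n-Ap)\|^2$, and the same telescoping then forces $\|(t_n-x_n)+\lambda_n(Ax_n-Ap)\|\to0$, whence $\|t_n-x_n\|\to0$. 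Writing (\ref{12}) as $y_n=(1-\alpha_n)x_n+\alpha_n W_nt_n$ and applying Schu's Lemma~\ref{a} (the case $\lim\|x_n-p\|=0$ being trivial) gives $\|W_nt_n-x_n\|\to0$; the triangle inequality and nonexpansiveness of $W_n$ then give $\|W_nx_n-x_n\|\to0$, and after controlling $\sup_{x\in B}\|W_nx-Wx\|$ on a bounded set $B\supset\{x_n\}$ (a uniform-convergence property of the $W$-mappings from Shimoji--Takahashi) we arrive at $\|Wx_n-x_n\|\to0$.

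The identification of weak cluster points is then standard. Let $x_{n_j}\rightharpoonup w$; since $\|t_n-x_n\|\to0$ we also have $t_{n_j}\rightharpoonup w$. From $\|(I-W)x_{n_j}\|\to0$ and the demiclosedness of $I-W$ (Lemma~\ref{b}, applicable because $F(W)=\bigcap_n F(T_n)\ne\emptyset$ by Lemma~\ref{e3}) we get $w\in F(W)=\bigcap_n F(T_n)$. For the variational-inequality part I would use the maximal monotone operator $S=A+N_C$ from the preliminaries: for $(v,g)\in G(S)$, the projection property (iii) applied to $t_n=P_C(I-\lambda_n A)x_n$ together with monotonicity of $A$ yields $\langle v-t_n,g\rangle\ge\langle v-t_n,At_n-Ax_n\rangle+\langle v-t_n,(x_n-t_n)/\lambda_n\rangle$; letting $j\to\infty$, and using $\|t_n-x_n\|\to0$, $\lambda_n\ge a$, and the $\frac1\alpha$-Lipschitz continuity of $A$ (Remark~\ref{z}), gives $\langle v-w,g\rangle\ge0$, so maximality forces $0\in Sw$, i.e.\ $w\in\Omega$. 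Hence $w\in\tciFourier$. Opial's condition together with the convergence of $\{\|x_n-p\|\}$ for every $p\in\tciFourier$ excludes two distinct weak cluster points, so $x_n\rightharpoonup z$ for a single $z\in\tciFourier$.

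The last step is to upgrade weak convergence to strong convergence, and this is where I expect the real difficulty. I would apply Lemma~\ref{c} with $\tciFourier$ in place of $C$: since $\|x_{n+1}-p\|\le\|x_n-p\|$ for every $p\in\tciFourier$, the projections $\{P_{\tciFourier}x_n\}$ converge strongly to some $u\in\tciFourier$; passing to the limit in $\langle x_n-P_{\tciFourier}x_n,P_{\tciFourier}x_n-z\rangle\ge0$ (property (iii)) with $x_n\rightharpoonup z$ and $P_{\tciFourier}x_n\to u$ identifies $z=u$, and $z\in\Omega$ solves the variational inequality (\ref{25}). The genuine obstacle is controlling $\|x_n-z\|$ rather than $\|P_{\tciFourier}x_n-z\|$: Fej\'er monotonicity and weak convergence alone do not in general force $\|x_n-P_{\tciFourier}x_n\|\to0$, so making the strong-convergence conclusion rigorous must rely on something more than the weak-convergence machinery above, and this is the step I would scrutinize most carefully.
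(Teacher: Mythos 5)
Your proposal reproduces, essentially verbatim, Steps 1--3 of the paper's argument (Fej\'er monotonicity, $\|Ax_{n}-Ap\|\rightarrow 0$, $\|t_{n}-x_{n}\|\rightarrow 0$ via firm nonexpansiveness of $P_{C}$, $\|W_{n}t_{n}-x_{n}\|\rightarrow 0$ via Schu's lemma, $\|Wx_{n}-x_{n}\|\rightarrow 0$, and the identification of weak cluster points through demiclosedness and the maximal monotone operator $A+N_{C}$), and your final paragraph correctly isolates the one step that actually carries the strong-convergence claim. But as written the proposal does not prove the theorem: it delivers weak convergence of $\{x_{n}\}$ and strong convergence of the shadow sequence $\{P_{\tciFourier }x_{n}\}$, and you explicitly concede that you cannot pass from these to $\|x_{n}-z\|\rightarrow 0$. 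That is a genuine gap, not a presentational one --- Fej\'er monotonicity plus demiclosedness is exactly the hypothesis set under which Takahashi and Toyoda obtain only \emph{weak} convergence for the closely related scheme (\ref{A}), and no reshuffling of Steps 1--3 produces strong convergence without an additional ingredient.

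The paper's additional ingredient is its Steps 4--5: it asserts that the variational inequality (\ref{25}) has a unique solution $z$ (arguing from (\ref{26})--(\ref{28}) and inverse strong monotonicity), deduces that the weak limit $p$ must equal $z$, concludes $\limsup_{n}\left[ \alpha _{n}\left\langle W_{n}t_{n}-z,x_{n}-z\right\rangle +\left( 1-\alpha _{n}\right) \left\Vert W_{n}t_{n}-z\right\Vert ^{2}\right] \leq 0$, and feeds this into Lemma \ref{Y} to force $\|x_{n}-z\|\rightarrow 0$. Be aware that your suspicion about this juncture is well founded: inverse strong monotonicity applied to (\ref{28}) only yields $Ap=Aq$, not $p=q$ (take $A\equiv 0$, for which $\Omega =C$), and the evaluation of $\limsup_{i}\left\langle W_{n}t_{n_{i}}-z,x_{n_{i}}-z\right\rangle $ as $\left\Vert p-z\right\Vert ^{2}$ passes to the limit in an inner product of two merely weakly convergent sequences, which is not justified. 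So the step you declined to supply is precisely the step at which the paper's own proof is unconvincing; your write-up is incomplete, but it is incomplete at the right place, and completing it would require either strengthening the hypotheses or a genuinely new estimate rather than the route sketched in Steps 4--5 of the paper.
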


\begin{proof}
We devide our proof into five steps.

\textbf{Step 1. }First, we show that $\left\{ x_{n}\right\} $ is a bounded
sequence. Let $t_{n}=P_{C}\left( I-\lambda _{n}A\right) x_{n}$ and $z\in
\tciFourier $. Then, we have%
\begin{eqnarray}
\left\Vert t_{n}-z\right\Vert ^{2} &=&\left\Vert P_{C}\left( I-\lambda
_{n}A\right) x_{n}-z\right\Vert ^{2}  \notag \\
&\leq &\left\Vert \left( I-\lambda _{n}A\right) x_{n}-\left( I-\lambda
_{n}A\right) z\right\Vert ^{2}  \notag \\
&=&\left\Vert x_{n}-z-\lambda _{n}\left( Ax_{n}-Az\right) \right\Vert ^{2} 
\notag \\
&\leq &\left\Vert x_{n}-z\right\Vert ^{2}-2\lambda _{n}\left\langle
x_{n}-z,Ax_{n}-Az\right\rangle +\lambda _{n}^{2}\left\Vert
Ax_{n}-Az\right\Vert ^{2}  \notag \\
&\leq &\left\Vert x_{n}-z\right\Vert ^{2}+\lambda _{n}\left( \lambda
_{n}-2\alpha \right) \left\Vert Ax_{n}-Az\right\Vert ^{2}  \notag \\
&\leq &\left\Vert x_{n}-z\right\Vert ^{2}  \label{1}
\end{eqnarray}%
and from (\ref{1}) we get%
\begin{eqnarray}
\left\Vert x_{n+1}-z\right\Vert ^{2} &=&\left\Vert W_{n}P_{C}\left(
I-\lambda _{n}A\right) y_{n}-z\right\Vert ^{2}  \notag \\
&=&\left\Vert W_{n}P_{C}\left( I-\lambda _{n}A\right) y_{n}-W_{n}P_{C}\left(
I-\lambda _{n}A\right) z\right\Vert ^{2}  \notag \\
&\leq &\left\Vert y_{n}-z\right\Vert ^{2}  \notag \\
&=&\left\Vert \left( 1-\alpha _{n}\right) \left( x_{n}-z\right) +\alpha
_{n}\left( W_{n}t_{n}-z\right) \right\Vert ^{2}  \notag \\
&\leq &\left( 1-\alpha _{n}\right) \left\Vert x_{n}-z\right\Vert ^{2}+\alpha
_{n}\left\Vert W_{n}t_{n}-z\right\Vert ^{2}  \notag \\
&\leq &\left( 1-\alpha _{n}\right) \left\Vert x_{n}-z\right\Vert ^{2}+\alpha
_{n}\left\Vert t_{n}-z\right\Vert ^{2}  \notag \\
&\leq &\left( 1-\alpha _{n}\right) \left\Vert x_{n}-z\right\Vert ^{2}  \notag
\\
&&+\alpha _{n}\left[ \left\Vert x_{n}-z\right\Vert ^{2}+\lambda _{n}\left(
\lambda _{n}-2\alpha \right) \left\Vert Ax_{n}-Az\right\Vert ^{2}\right]  
\notag \\
&=&\left\Vert x_{n}-z\right\Vert ^{2}+\alpha _{n}\lambda _{n}\left( \lambda
_{n}-2\alpha \right) \left\Vert Ax_{n}-Az\right\Vert ^{2}  \notag \\
&\leq &\left\Vert x_{n}-z\right\Vert ^{2}+da\left( b-2\alpha \right)
\left\Vert Ax_{n}-Az\right\Vert ^{2}  \notag \\
&\leq &\left\Vert x_{n}-z\right\Vert ^{2}.  \label{2*}
\end{eqnarray}%
Therefore, the limit $\lim_{n\rightarrow \infty }\left\Vert
x_{n}-z\right\Vert $ exists and $Ax_{n}-Az\rightarrow 0.$ Hence, $\left\{
x_{n}\right\} $ is bounded and so are $\left\{ t_{n}\right\} $ and $\left\{
W_{n}t_{n}\right\} $.

\textbf{Step 2.} We will show that $\lim_{n\rightarrow \infty }\left\Vert
x_{n}-y_{n}\right\Vert =0.$ Before that, we shall show that $%
\lim_{n\rightarrow \infty }\left\Vert W_{n}t_{n}-x_{n}\right\Vert =0$. From
Step 1, we know that $\lim_{n\rightarrow \infty }\left\Vert
x_{n}-z\right\Vert $ exists for all $z\in \tciFourier $.\ Let $%
\lim_{n\rightarrow \infty }\left\Vert x_{n}-z\right\Vert =c.$\ From (\ref{2*}%
), since%
\begin{equation*}
\left\Vert x_{n+1}-z\right\Vert \leq \left\Vert y_{n}-z\right\Vert \leq
\left\Vert x_{n}-z\right\Vert ,
\end{equation*}%
we get%
\begin{equation}
\lim_{n\rightarrow \infty }\left\Vert y_{n}-z\right\Vert =c.  \label{*1}
\end{equation}%
On the other hand, since%
\begin{equation*}
\left\Vert W_{n}t_{n}-z\right\Vert \leq \left\Vert t_{n}-z\right\Vert \leq
\left\Vert x_{n}-z\right\Vert ,
\end{equation*}%
we have%
\begin{equation}
\limsup_{n\rightarrow \infty }\left\Vert W_{n}t_{n}-z\right\Vert \leq c.
\label{*2}
\end{equation}%
Also, we know that%
\begin{equation}
\limsup_{n\rightarrow \infty }\left\Vert x_{n}-z\right\Vert \leq c
\label{*3}
\end{equation}%
and%
\begin{equation}
\lim_{n\rightarrow \infty }\left\Vert y_{n}-z\right\Vert =\lim_{n\rightarrow
\infty }\left\Vert \left( 1-\alpha _{n}\right) \left( x_{n}-z\right) +\alpha
_{n}\left( W_{n}t_{n}-z\right) \right\Vert =c.  \label{*4}
\end{equation}%
Hence, from (\ref{*2}), (\ref{*3}), (\ref{*4}), and Lemma \ref{a} , we get
that%
\begin{equation}
\lim_{n\rightarrow \infty }\left\Vert x_{n}-W_{n}t_{n}\right\Vert =0.
\label{*5}
\end{equation}%
We have also%
\begin{equation*}
\left\Vert x_{n}-y_{n}\right\Vert =\alpha _{n}\left\Vert
W_{n}t_{n}-x_{n}\right\Vert .
\end{equation*}%
So, from (\ref{*5}) we obtain that%
\begin{equation}
\lim_{n\rightarrow \infty }\left\Vert x_{n}-y_{n}\right\Vert =0.  \label{5.5}
\end{equation}%
Since $A$ is Lipschitz continuous, we have $Ax_{n}-Ay_{n}\rightarrow 0.$

\textbf{Step 3.} Now, we show that $\lim_{n\rightarrow \infty }\left\Vert
Wx_{n}-x_{n}\right\Vert =0.$ Using the properties of the metric projection,
since%
\begin{eqnarray*}
\left\Vert t_{n}-z\right\Vert ^{2} &=&\left\Vert P_{C}\left( I-\lambda
_{n}A\right) x_{n}-P_{C}\left( I-\lambda _{n}A\right) z\right\Vert ^{2} \\
&\leq &\left\langle t_{n}-z,\left( I-\lambda _{n}A\right) x_{n}-\left(
I-\lambda _{n}A\right) z\right\rangle  \\
&=&\frac{1}{2}\left[ \left\Vert t_{n}-z\right\Vert ^{2}+\left\Vert \left(
I-\lambda _{n}A\right) x_{n}-\left( I-\lambda _{n}A\right) z\right\Vert
^{2}\right.  \\
&&\left. -\left\Vert t_{n}-z-\left[ \left( I-\lambda _{n}A\right)
x_{n}-\left( I-\lambda _{n}A\right) z\right] \right\Vert ^{2}\right]  \\
&\leq &\frac{1}{2}\left[ \left\Vert t_{n}-z\right\Vert ^{2}+\left\Vert
x_{n}-z\right\Vert ^{2}-\left\Vert \left( t_{n}-x_{n}\right) -\lambda
_{n}\left( Ax_{n}-Az\right) \right\Vert ^{2}\right]  \\
&=&\frac{1}{2}\left[ \left\Vert t_{n}-z\right\Vert ^{2}+\left\Vert
x_{n}-z\right\Vert ^{2}-\left\Vert t_{n}-x_{n}\right\Vert ^{2}\right.  \\
&&\left. -2\lambda _{n}\left\langle t_{n}-x_{n},Ax_{n}-Az\right\rangle
-\lambda _{n}^{2}\left\Vert Ax_{n}-Az\right\Vert ^{2}\right] ,
\end{eqnarray*}%
it follows that%
\begin{eqnarray}
\left\Vert t_{n}-z\right\Vert ^{2} &\leq &\left\Vert x_{n}-z\right\Vert
^{2}-\left\Vert t_{n}-x_{n}\right\Vert ^{2}  \notag \\
&&+2\lambda _{n}\left\langle t_{n}-x_{n},Ax_{n}-Az\right\rangle -\lambda
_{n}^{2}\left\Vert Ax_{n}-Az\right\Vert ^{2}.  \label{8}
\end{eqnarray}%
So, by using the inequality (\ref{8}) and (\ref{2*}),\ we get%
\begin{eqnarray*}
\left\Vert x_{n+1}-z\right\Vert ^{2} &\leq &\left( 1-\alpha _{n}\right)
\left\Vert x_{n}-z\right\Vert ^{2}+\alpha _{n}\left\Vert t_{n}-z\right\Vert
^{2} \\
&\leq &\left\Vert x_{n}-z\right\Vert ^{2}-\alpha _{n}\left\Vert
t_{n}-x_{n}\right\Vert ^{2} \\
&&+2\lambda _{n}\alpha _{n}\left\langle t_{n}-x_{n},Ax_{n}-Az\right\rangle
-\lambda _{n}^{2}\alpha _{n}\left\Vert Ax_{n}-Az\right\Vert ^{2} \\
&\leq &\left\Vert x_{n}-z\right\Vert ^{2}-d\left\Vert t_{n}-x_{n}\right\Vert
^{2} \\
&&+2\lambda _{n}\alpha _{n}\left\langle t_{n}-x_{n},Ax_{n}-Az\right\rangle
-\lambda _{n}^{2}\alpha _{n}\left\Vert Ax_{n}-Az\right\Vert ^{2}.
\end{eqnarray*}%
Since $\lim_{n\rightarrow \infty }\left\Vert x_{n+1}-z\right\Vert
=\lim_{n\rightarrow \infty }\left\Vert x_{n}-z\right\Vert $ and $%
Ax_{n}-Az\rightarrow 0,$\ we obtain%
\begin{equation}
\lim_{n\rightarrow \infty }\left\Vert x_{n}-t_{n}\right\Vert =0.  \label{*6}
\end{equation}%
On the other hand, we have%
\begin{eqnarray*}
\left\Vert W_{n}x_{n}-x_{n}\right\Vert  &\leq &\left\Vert
W_{n}x_{n}-W_{n}t_{n}\right\Vert +\left\Vert W_{n}t_{n}-x_{n}\right\Vert  \\
&\leq &\left\Vert x_{n}-t_{n}\right\Vert +\left\Vert
W_{n}t_{n}-x_{n}\right\Vert .
\end{eqnarray*}%
So, it follows from (\ref{*5}) and (\ref{*6}) that%
\begin{equation}
\lim_{n\rightarrow \infty }\left\Vert W_{n}x_{n}-x_{n}\right\Vert =0.
\label{20}
\end{equation}%
Hence, from (\ref{20})\ and by the same argument as in the \cite[Remark 2.2]%
{ceng}, it follows that%
\begin{equation}
\left\Vert Wx_{n}-x_{n}\right\Vert \leq \left\Vert
Wx_{n}-W_{n}x_{n}\right\Vert +\left\Vert W_{n}x_{n}-x_{n}\right\Vert
\rightarrow 0,  \label{*7}
\end{equation}%
as $n\rightarrow \infty $.

\textbf{Step 4. }Next, we show that 
\begin{equation*}
\limsup_{n\rightarrow \infty }\left[ \left\langle
W_{n}t_{n}-z,x_{n}-z\right\rangle +\left\Vert W_{n}t_{n}-z\right\Vert ^{2}%
\right] \leq 0,
\end{equation*}%
where $z\in \tciFourier $.\ But first, we need to show that the variational
inequality (\ref{25}) has unique solution. Indeed, suppose both $p\in C$ and 
$q\in C$ are solutions to (\ref{25}), then%
\begin{equation}
\left\langle Ap,p-q\right\rangle \leq 0  \label{26}
\end{equation}%
and%
\begin{equation}
\left\langle Aq,q-p\right\rangle \leq 0.  \label{27}
\end{equation}%
Combining (\ref{26}) and (\ref{27}), we get%
\begin{equation}
\left\langle Aq-Ap,q-p\right\rangle \leq 0.  \label{28}
\end{equation}%
Since the mapping $A$ is an inverse strongly monotone mapping, (\ref{28})
implies $p=q.$ So, the uniqueness of the solution of the variational
inequality (\ref{25}) is proved. Next, we need to show that $\left\{
x_{n}\right\} $ converges weakly to an element of $\tciFourier $. Since $%
\left\{ x_{n}\right\} $ and $\left\{ W_{n}t_{n}\right\} $ are bounded
sequences, there exist subsequences $\left\{ x_{n_{i}}\right\} $ of $\left\{
x_{n}\right\} $ and $\left\{ W_{n}t_{n_{i}}\right\} $ of $\left\{
W_{n}t_{n}\right\} $ such that%
\begin{eqnarray}
&&\limsup_{n\rightarrow \infty }\left[ \left\langle
W_{n}t_{n}-z,x_{n}-z\right\rangle +\left\Vert W_{n}t_{n}-z\right\Vert ^{2}%
\right]   \notag \\
&=&\limsup_{i\rightarrow \infty }\left[ \left\langle
W_{n}t_{n_{i}}-z,x_{n_{i}}-z\right\rangle +\left\Vert
W_{n}t_{n_{i}}-z\right\Vert ^{2}\right] .  \label{21}
\end{eqnarray}%
Without loss of generality, we may further assume that $x_{n_{i}}%
\rightharpoonup p.$ From (\ref{*5}), we have $W_{n}t_{n_{i}}\rightharpoonup p
$. Hence, (\ref{21}) reduces to%
\begin{equation*}
\limsup_{n\rightarrow \infty }\left[ \left\langle
W_{n}t_{n}-z,x_{n}-z\right\rangle +\left\Vert W_{n}t_{n}-z\right\Vert ^{2}%
\right] =2\left\Vert p-z\right\Vert ^{2}
\end{equation*}%
Now, it is sufficient to show that $p$ belongs to $\tciFourier ,$ i.e., $p=z$%
. First, we show that $p\in \Omega .$ Let%
\begin{equation*}
Sv=\left\{ 
\begin{array}{ll}
Av+N_{C}v & ,\text{ }v\in C, \\ 
\emptyset  & ,\text{ }v\notin C.%
\end{array}%
\right. 
\end{equation*}%
Then, $S$ is maximal monotone mapping. Let $\left( v,w\right) \in G\left(
S\right) .$ Since $w-Av\in N_{C}v$ and $t_{n}\in C,$ we get%
\begin{equation}
\left\langle v-t_{n},w-Av\right\rangle \geq 0.  \label{10}
\end{equation}%
On the other hand, from the definiton of $t_{n},$ we have that%
\begin{equation*}
\left\langle x_{n}-\lambda _{n}Ax_{n}-t_{n},t_{n}-v\right\rangle \geq 0
\end{equation*}%
and hence,%
\begin{equation*}
\left\langle v-t_{n},\frac{t_{n}-x_{n}}{\lambda _{n}}+Ax_{n}\right\rangle
\geq 0.
\end{equation*}%
Therefore, using (\ref{10}), we get%
\begin{eqnarray*}
\left\langle v-t_{n_{i}},w\right\rangle  &\geq &\left\langle
v-t_{n_{i}},Av\right\rangle  \\
&\geq &\left\langle v-t_{n_{i}},Av\right\rangle -\left\langle v-t_{n_{i}},%
\frac{t_{n_{i}}-x_{n_{i}}}{\lambda _{n_{i}}}+Ax_{n_{i}}\right\rangle  \\
&=&\left\langle v-t_{n_{i}},Av-Ax_{n_{i}}-\frac{t_{n_{i}}-x_{n_{i}}}{\lambda
_{n_{i}}}\right\rangle  \\
&=&\left\langle v-t_{n_{i}},Av-At_{n_{i}}\right\rangle +\left\langle
v-t_{n_{i}},At_{n_{i}}-Ax_{n_{i}}\right\rangle  \\
&&-\left\langle v-t_{n_{i}},\frac{t_{n_{i}}-x_{n_{i}}}{\lambda _{n_{i}}}%
\right\rangle  \\
&\geq &\left\langle v-t_{n_{i}},At_{n_{i}}-Ax_{n_{i}}\right\rangle
-\left\langle v-t_{n_{i}},\frac{t_{n_{i}}-x_{n_{i}}}{\lambda _{n_{i}}}%
\right\rangle .
\end{eqnarray*}%
Hence, for $i\rightarrow \infty ,$ we have%
\begin{equation*}
\left\langle v-p,w\right\rangle \geq 0.
\end{equation*}%
Since $S$ is maximal monotone, we have $p\in S^{-1}0$ and hence $p\in \Omega
.$ Next, we show that $p\in F\left( W\right) .$ From (\ref{*7}), Lemma \ref%
{b} and by using $x_{n_{i}}\rightharpoonup p$, we have that $p\in F\left(
W\right) .$ So, from Lemma \ref{e3}, we get $p\in \tciFourier $.

Also, Opial's condition guarantee that the weakly subsequential limit of $%
\left\{ x_{n}\right\} $ is unique. Hence, this implies that $%
x_{n}\rightharpoonup p\in \tciFourier .$ From the uniqueness of the solution
of the variational inequality, we obtain $p=z\in \tciFourier $. So, the
desired conclusion%
\begin{equation*}
\limsup_{n\rightarrow \infty }\left[ \left\langle
W_{n}t_{n}-z,x_{n}-z\right\rangle +\left\Vert W_{n}t_{n}-z\right\Vert ^{2}%
\right] \leq 0
\end{equation*}%
is obtained.

Furthermore, $p=\lim_{n\rightarrow \infty }P_{\tciFourier }x_{n}.$ Indeed,
since $p\in \tciFourier ,$ we have%
\begin{equation*}
\left\langle p-P_{\tciFourier }x_{n},P_{\tciFourier
}x_{n}-x_{n}\right\rangle \geq 0.
\end{equation*}%
By Lemma \ref{c}, $\left\{ P_{\tciFourier }x_{n}\right\} $ converges
strongly to $u_{0}\in \tciFourier .$ Then, we get%
\begin{equation*}
\left\langle p-u_{0},u_{0}-p\right\rangle \geq 0,
\end{equation*}%
and hence $p=u_{0}.$

\textbf{Step 5.\ }\ Let\textbf{\ }$z\in \tciFourier .$ Then, we have%
\begin{eqnarray*}
\left\Vert x_{n+1}-z\right\Vert ^{2} &=&\left\Vert W_{n}P_{C}\left(
I-\lambda _{n}A\right) y_{n}-z\right\Vert ^{2} \\
&=&\left\Vert W_{n}P_{C}\left( I-\lambda _{n}A\right) y_{n}-W_{n}P_{C}\left(
I-\lambda _{n}A\right) z\right\Vert ^{2} \\
&\leq &\left\Vert y_{n}-z\right\Vert ^{2}=\left\langle
y_{n}-z,y_{n}-z\right\rangle \\
&=&\left\langle \left( 1-\alpha _{n}\right) \left( x_{n}-z\right) +\alpha
_{n}\left( W_{n}t_{n}-z\right) ,y_{n}-z\right\rangle \\
&=&\left( 1-\alpha _{n}\right) \left\langle x_{n}-z,y_{n}-z\right\rangle
+\alpha _{n}\left\langle W_{n}t_{n}-z,y_{n}-z\right\rangle \\
&\leq &\left( 1-\alpha _{n}\right) \left\Vert x_{n}-z\right\Vert ^{2}+\alpha
_{n}\left\langle W_{n}t_{n}-z,y_{n}-z\right\rangle \\
&=&\left( 1-\alpha _{n}\right) \left\Vert x_{n}-z\right\Vert ^{2}+\alpha
_{n}^{2}\left\langle W_{n}t_{n}-z,x_{n}-z\right\rangle \\
&&+\alpha _{n}\left( 1-\alpha _{n}\right) \left\langle
W_{n}t_{n}-z,W_{n}t_{n}-z\right\rangle \\
&=&\left( 1-\alpha _{n}\right) \left\Vert x_{n}-z\right\Vert ^{2}+\alpha
_{n}\beta _{n}
\end{eqnarray*}
where $\beta _{n}=\alpha _{n}\left\langle W_{n}t_{n}-z,x_{n}-z\right\rangle
+\left( 1-\alpha _{n}\right) \left\Vert W_{n}t_{n}-z\right\Vert ^{2}$. Thus
an application of Lemma \ref{Y} combined with Step 4 yields that the
sequence $\left\{ x_{n}\right\} $ defined by (\ref{12}) converges strongly
to the unique element $z\in \tciFourier .$
\end{proof}

\begin{corollary}
Let $C$ be a nonempty closed convex subset of a real Hilbert space $H,$ let $%
A:C\rightarrow H$ be an $\alpha $-inverse strongly monotone mapping and let $%
T$ be a nonexpansive self-mappings on $C$ such that $F\left( T\right) \cap
\Omega \neq \emptyset .$ Let $\left\{ x_{n}\right\} $ be a sequence defined
by%
\begin{equation*}
\left\{ 
\begin{array}{l}
x_{0}=x\in C \\ 
x_{n+1}=TP_{C}\left( I-\lambda _{n}A\right) y_{n} \\ 
y_{n}=\left( 1-\alpha _{n}\right) x_{n}+\alpha _{n}TP_{C}\left( I-\lambda
_{n}A\right) x_{n},\forall n\geq 0,%
\end{array}%
\right. 
\end{equation*}%
where $\{\lambda _{n}\}\subset \lbrack a,b]$ for some $a,b\in (0,2\alpha )$
and $\left\{ \alpha _{n}\right\} \subset \left[ c,d\right] $ for some $%
c,d\in \left( 0,1\right) $. Then, the sequence $\left\{ x_{n}\right\} $
converges strongly to a point $z\in F\left( T\right) \cap \Omega $ where $z$
is the unique solution of the variational inequality (\ref{25}).
\end{corollary}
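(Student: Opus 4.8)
The plan is to recognize this corollary as the one-mapping specialization of \thmref{1*} and to obtain it by rerunning the five-step argument of the theorem with the single nonexpansive map $T$ in place of the $W$-mapping $W_n$ at every occurrence. Strictly speaking the corollary is not literally the case $T_n\equiv T$ of the iteration (\ref{12}), since (\ref{12}) feeds the iterates through $W_n$ rather than through $T$ directly, and the standing restriction $0<\mu_n\le b<1$ forbids collapsing $W_n$ to $T$ by a parameter choice (that would require $\mu_1=1$). Hence I would not try to quote \thmref{1*} as a black box; instead I would verify that every property of $W_n$ exploited in its proof is already available for a single nonexpansive $T$ with $F(T)\neq\emptyset$, and then transcribe the estimates.

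First I would redo Step 1 and Step 2 with $t_n=P_C(I-\lambda_n A)x_n$ and $x_{n+1}=TP_C(I-\lambda_n A)y_n$. The only feature of $W_n$ used there is nonexpansivity, so the chain $\|x_{n+1}-z\|\le\|y_n-z\|\le\|x_n-z\|$, the existence of $\lim_n\|x_n-z\|$, the relation $Ax_n-Az\to0$, and finally $\|Tt_n-x_n\|\to0$ and $\|x_n-y_n\|\to0$ go through unchanged, invoking \lemref{a} exactly as before. Step 3 is where the corollary is genuinely simpler: the projection identity (\ref{8}) together with the telescoping bound gives $\|x_n-t_n\|\to0$, and then
\begin{equation*}
\left\Vert Tx_n-x_n\right\Vert \le \left\Vert Tx_n-Tt_n\right\Vert +\left\Vert Tt_n-x_n\right\Vert \le \left\Vert x_n-t_n\right\Vert +\left\Vert Tt_n-x_n\right\Vert \to 0,
\end{equation*}
so that $\|Tx_n-x_n\|\to0$ directly, with no need for \lemref{AA}, \lemref{e3}, or the auxiliary passage from $W_n$ to the limit mapping $W$ used in (\ref{*7}).

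For Step 4 I would extract a subsequence $x_{n_i}\rightharpoonup p$, deduce $Tt_{n_i}\rightharpoonup p$ from $\|Tt_n-x_n\|\to0$, and identify $p$ with the common element. The inclusion $p\in\Omega$ is verified by the identical maximal-monotone argument, since the set-valued map $Sv=Av+N_Cv$ and the point $t_n$ depend only on $A$, not on $T$. The inclusion $p\in F(T)$ now follows at once from the demiclosedness of $I-T$ furnished by \lemref{b}, applied to $x_{n_i}\rightharpoonup p$ together with $\|Tx_n-x_n\|\to0$; \lemref{e3} is not needed because the fixed-point set is literally $F(T)$. Opial's condition then forces a unique weak subsequential limit, so $x_n\rightharpoonup p$, and uniqueness of the solution of (\ref{25}) (reproved from inverse strong monotonicity of $A$) gives $p=z$. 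Finally Step 5 reproduces
\begin{equation*}
\left\Vert x_{n+1}-z\right\Vert ^2\le \left( 1-\alpha _n\right) \left\Vert x_n-z\right\Vert ^2+\alpha _n\beta _n,\quad \beta _n=\alpha _n\left\langle Tt_n-z,x_n-z\right\rangle +\left( 1-\alpha _n\right) \left\Vert Tt_n-z\right\Vert ^2,
\end{equation*}
and an application of \lemref{Y} with $\limsup_n\beta_n\le0$ (Step 4) yields the strong convergence $x_n\to z$.

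The main obstacle is bookkeeping rather than mathematics: one must confirm that discarding the $W_n\to W$ approximation machinery is legitimate, which it is precisely because the operator is constant in $n$, so that the demiclosedness of $I-T$ can be invoked directly in Step 4. Everything else is a faithful transcription of the proof of \thmref{1*}.
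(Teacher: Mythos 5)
Your proposal is correct and follows essentially the same route as the paper: the corollary is intended as the single-mapping instance of Theorem~\ref{1*}, and you simply transcribe its five-step proof with $T$ in place of $W_{n}$, correctly observing that the only properties of $W_{n}$ actually used are nonexpansivity and the identification of its fixed-point set, so that Lemmas~\ref{AA} and \ref{e3} and the passage from $W_{n}$ to $W$ in (\ref{*7}) may be dropped in favor of a direct appeal to the demiclosedness of $I-T$ from Lemma~\ref{b}. Your remark that the standing restriction $0<\mu_{n}\leq b<1$ prevents literally collapsing $W_{n}$ to $T$ by a parameter choice is a fair point of rigor, but it does not change the substance of the argument.
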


\section{Applications}

In the first section, we state that the convex minimization problem is one
of the application area of the variational inequality problems and the fixed
point problems. One of the relationships between a convex minimization
problem and a variational inequality problem is as follows: Let $f$ be a
convex differentiable function on a nonempty closed convex subset $C$ of a
real Hilbert space $H$ and $\limfunc{Argmin}_{x\in C}f\left( x\right) $ be
the set of minimizers of $f$ relative to the set $C$. Then, it is known that
element $x^{\ast }\in C$ is a minimizer of $f\left( x\right) $ if and only
if $x^{\ast }$ satisfies the variational inequality (\ref{25}). On the other
hand, iterative processes are often used to minimize a convex differentiable
function. Also, it is stated in Remark \ref{y} that every $L$-Lipschitzian
mapping is $2/L$-inverse strongly monotone mapping. Therefore, we can give
the following strong convergence theorem.

\begin{theorem}
Let $C$ be a nonempty closed convex subset of a real Hilbert space $H$. Let $%
f$ be a convex differentiable function on an open set $D$ containing the set 
$C$ and let $\left\{ T_{n}\right\} $ be an infinite family of nonexpansive
self mappings on $C$ such that $\mathcal{G=}\bigcap_{n=0}^{\infty }F\left(
T_{n}\right) \cap \limfunc{Argmin}_{x\in C}f\left( x\right) \neq \emptyset $%
. Suppose that the gradient vector of $f$, $\nabla f,$ is a $L$-Lipschitz
continuous operator on $D$. For an arbitrarily initial value $x_{0}\in C,$
let $\left\{ x_{n}\right\} $ be a sequence in $C$ defined by%
\begin{equation*}
\left\{ 
\begin{array}{l}
x_{n+1}=W_{n}P_{C}\left( I-\lambda _{n}\nabla f\right) y_{n} \\ 
y_{n}=\left( 1-\alpha _{n}\right) x_{n}+\alpha _{n}W_{n}P_{C}\left(
I-\lambda _{n}\nabla f\right) x_{n},\forall n\geq 0,%
\end{array}%
\right. 
\end{equation*}%
where $W_{n}$ is a mapping defined by (\ref{11}), $\{\lambda _{n}\}\subset
\lbrack a,b]$ for some $a,b\in (0,4/L)$ and $\left\{ \alpha _{n}\right\}
\subset \left[ c,d\right] $ for some $c,d\in \left( 0,1\right) $. Then the
sequence $\left\{ x_{n}\right\} $ converges strongly to an element of $%
\mathcal{G}$.
\end{theorem}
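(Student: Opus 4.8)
The plan is to recognize this theorem as a direct specialization of Theorem~\ref{1*}, obtained by taking the inverse strongly monotone mapping $A$ to be the gradient $\nabla f$. Three things must be checked so that the hypotheses of Theorem~\ref{1*} are met: that $\nabla f$ is inverse strongly monotone with the appropriate constant, that the solution set $\Omega$ of $VI\left( C,\nabla f\right) $ coincides with $\limfunc{Argmin}_{x\in C}f\left( x\right) $, and that the admissible range of the step sizes $\left\{ \lambda _{n}\right\} $ stated here matches the range $\left( 0,2\alpha \right) $ required there.

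First, I would address the inverse strong monotonicity. By hypothesis $\nabla f$ is $L$-Lipschitz continuous on $D\supseteq C$, so in particular $\nabla f:C\rightarrow H$ is $L$-Lipschitzian. By Remark~\ref{y}, every $L$-Lipschitzian mapping is $\frac{2}{L}$-inverse strongly monotone; hence $A:=\nabla f$ is $\alpha $-inverse strongly monotone with $\alpha =\frac{2}{L}$. Consequently $2\alpha =\frac{4}{L}$, so the condition $\left\{ \lambda _{n}\right\} \subset \left[ a,b\right] $ with $a,b\in \left( 0,4/L\right) $ is precisely the condition $\left\{ \lambda _{n}\right\} \subset \left[ a,b\right] $ with $a,b\in \left( 0,2\alpha \right) $ demanded by Theorem~\ref{1*}, while the condition on $\left\{ \alpha _{n}\right\} $ is identical in both statements.

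Next, I would identify the two solution sets. Since $f$ is convex and differentiable on the open set $D$ containing $C$, the standard first-order optimality condition for convex minimization over a convex set states that $x^{\ast }\in C$ minimizes $f$ over $C$ if and only if $\left\langle \nabla f\left( x^{\ast }\right) ,y-x^{\ast }\right\rangle \geq 0$ for all $y\in C$. The latter is exactly the variational inequality (\ref{25}) with $A=\nabla f$, so $\limfunc{Argmin}_{x\in C}f\left( x\right) =\Omega $. This identity is already recorded in the discussion preceding the theorem, where (\ref{25}) is noted to be the optimality condition for the minimization problem (\ref{13}); here it is applied with the concrete choice $A=\nabla f$. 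It follows that $\mathcal{G}=\bigcap_{n=0}^{\infty }F\left( T_{n}\right) \cap \limfunc{Argmin}_{x\in C}f\left( x\right) =\bigcap_{n=0}^{\infty }F\left( T_{n}\right) \cap \Omega $, which is nonempty by assumption.

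With these identifications in place, the iterative scheme in the statement is literally the scheme (\ref{12}) with $A$ replaced by $\nabla f$, and all hypotheses of Theorem~\ref{1*} hold. Applying Theorem~\ref{1*} therefore yields that $\left\{ x_{n}\right\} $ converges strongly to a point of $\bigcap_{n=0}^{\infty }F\left( T_{n}\right) \cap \Omega =\mathcal{G}$, which is the desired conclusion. There is no genuine obstacle in this argument; the only point requiring a moment of care is the bookkeeping of the inverse strong monotonicity constant, namely that the $\frac{2}{L}$ coming from Remark~\ref{y} doubles to the $\frac{4}{L}$ appearing in the step-size range, so that the two parameter windows coincide.
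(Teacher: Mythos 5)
Your argument is correct and takes essentially the same route as the paper, whose entire proof is the one-line remark that taking $A=\nabla f$ and invoking Remark~\ref{y} reduces the statement to Theorem~\ref{1*}; you merely make explicit the details the paper leaves implicit, namely the constant $\alpha =2/L$ (so that $2\alpha =4/L$ matches the stated step-size window) and the identification of the minimizer set with the solution set $\Omega $ of the variational inequality.
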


\begin{proof}
Considering the Remark \ref{y}, as in the proof of Theorem \ref{1*}, if we
take $A=\nabla f$, then we obtain the desired conclusion.
\end{proof}

Next, we give another theorem for a pair of nonexpansive mapping and
strictly pseudocontractive mapping. A mapping $S:C\rightarrow C$ is called $k
$-strictly pseudocontractive mapping if there exists $k$ with $0\leq k<1$
such that%
\begin{equation*}
\left\Vert Sx-Sy\right\Vert ^{2}\leq \left\Vert x-y\right\Vert
^{2}+k\left\Vert \left( I-S\right) x-\left( I-S\right) y\right\Vert ^{2}
\end{equation*}%
for all $x,y\in C.$ Let $A=I-S.$ Then, it is known that the mapping $A$ is
inverse strongly monotone mapping with $\left( 1-k\right) /2$, i.e.,%
\begin{equation*}
\left\langle Ax-Ay,x-y\right\rangle \geq \frac{1-k}{2}\left\Vert
Ax-Ay\right\Vert ^{2}.
\end{equation*}

\begin{theorem}
Let $C$ be a nonempty closed convex subset of a real Hilbert space $H.$ Let $%
\left\{ T_{n}\right\} $ be an infinite family of nonexpansive self mappings
on $C$ and $S:C\rightarrow C$ be a $k$-strictly pseudocontractive mapping
such that $\mathcal{H}=\bigcap_{n=0}^{\infty }F\left( T_{n}\right) \cap
F\left( S\right) \neq \emptyset .$ For an arbitrarily initial value $%
x_{0}\in C,$ let $\left\{ x_{n}\right\} $ be a sequence defined by%
\begin{equation*}
\left\{ 
\begin{array}{l}
x_{n+1}=W_{n}\left( \left( I-\lambda _{n}\right) y_{n}+\lambda
_{n}Sy_{n}\right)  \\ 
y_{n}=\left( 1-\alpha _{n}\right) x_{n}+\alpha _{n}W_{n}\left( \left(
I-\lambda _{n}\right) x_{n}+\lambda _{n}Sx_{n}\right) ,\forall n\geq 0,%
\end{array}%
\right. 
\end{equation*}%
where $\{\lambda _{n}\}\subset \lbrack a,b]$ for some $a,b\in (0,1-k)$ and $%
\left\{ \alpha _{n}\right\} \subset \left[ c,d\right] $ for some $c,d\in
\left( 0,1\right) $. Then, the sequence $\left\{ x_{n}\right\} $ converges
weakly to a point $p\in \mathcal{H}$.
\end{theorem}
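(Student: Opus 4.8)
The plan is to reduce the statement to \thmref{1*} by taking $A:=I-S$. As recorded immediately before the theorem, the $k$-strict pseudocontractivity of $S$ makes $A$ an $\alpha$-inverse strongly monotone mapping of $C$ into $H$ with $\alpha=\frac{1-k}{2}$, so that $2\alpha=1-k$ and the hypothesis $\{\lambda_{n}\}\subset[a,b]\subset(0,1-k)$ is precisely the admissibility condition $[a,b]\subset(0,2\alpha)$ required in \thmref{1*}.

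First I would check that the recursion in the statement is literally scheme (\ref{12}) for this $A$. For $x\in C$ and $0<\lambda\le 1$ one has
$$(I-\lambda A)x=x-\lambda\left(x-Sx\right)=\left(1-\lambda\right)x+\lambda Sx,$$
and since $x,Sx\in C$ and $C$ is convex, this point already lies in $C$; hence $P_{C}(I-\lambda_{n}A)x=(1-\lambda_{n})x+\lambda_{n}Sx$. (Here $\lambda_{n}\le b<1-k\le 1$, so the restriction $\lambda\le 1$ is automatic.) Substituting this expression into (\ref{12}) reproduces verbatim the formulas for $x_{n+1}$ and $y_{n}$ written in the theorem.

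Next I would identify the relevant sets. Using the characterization $x\in\Omega\Leftrightarrow x\in F(P_{C}(I-\lambda A))$, valid for every $\lambda>0$, together with the computation above for any fixed $\lambda\in(0,1]$, one gets $x\in\Omega\Leftrightarrow x=(1-\lambda)x+\lambda Sx\Leftrightarrow x=Sx\Leftrightarrow x\in F(S)$. Therefore $\Omega=F(S)$, and the set $\mathcal{H}=\bigcap_{n=0}^{\infty}F(T_{n})\cap F(S)$ coincides with the set $\tciFourier$ of \thmref{1*}, which is nonempty by assumption. Since $A=I-S$ is $\frac{1}{\alpha}$-Lipschitz by Remark~\ref{z}, all ingredients used in the proof of \thmref{1*} (boundedness, the estimates of Steps~1--3, the maximal monotone argument and the demiclosedness of $I-W$) are available for this $A$.

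With these identifications every hypothesis of \thmref{1*} holds, so Steps~1--4 of its proof apply without change: they yield boundedness of $\{x_{n}\}$, the asymptotic regularity $\Vert x_{n}-t_{n}\Vert\to 0$ and $\Vert W_{n}x_{n}-x_{n}\Vert\to 0$, and—through the maximal monotone/demiclosedness argument combined with Opial's condition—the weak convergence of $\{x_{n}\}$ to some $p\in\tciFourier=\mathcal{H}$. The point I would stress, and which I expect to be the only genuine subtlety, is why one stops at \emph{weak} convergence: the upgrade to strong convergence in Step~5 of \thmref{1*} relies on the variational inequality having a \emph{unique} solution, whereas here $\Omega=F(S)$ may contain several points, so that step is simply unavailable. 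By contrast, the weak convergence obtained in Step~4 rests only on Opial's condition (uniqueness of the weak subsequential limit of $\{x_{n}\}$), not on uniqueness of the inequality's solution, and hence survives intact and delivers the asserted weak convergence to a point $p\in\mathcal{H}$.
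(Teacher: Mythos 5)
Your proposal follows exactly the paper's route: set $A=I-S$, note that $A$ is $\frac{1-k}{2}$-inverse strongly monotone so that $(0,1-k)=(0,2\alpha)$, observe that $(1-\lambda_{n})x+\lambda_{n}Sx=P_{C}(I-\lambda_{n}A)x$, identify $F(S)=\Omega$ so that $\mathcal{H}=\tciFourier$, and invoke Theorem~\ref{1*}. Your added care — checking that the projection is genuinely redundant because the convex combination stays in $C$, and explaining that only the weak-convergence content of Steps~1--4 can be imported since $\Omega=F(S)$ need not be a singleton and hence Step~5's uniqueness-based upgrade to strong convergence is unavailable — goes beyond the paper's one-line citation of Theorem~\ref{1*} and correctly accounts for why the conclusion here is only weak convergence.
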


\begin{proof}
Let $A=I-S.$ Then, we know that $A$ is inverse strongly monotone mapping.
Also, it is clear that $F\left( S\right) =VI\left( C,A\right) .$ Since, $A$
is a mapping from $C$ into itself, we get%
\begin{equation*}
\left( I-\lambda _{n}\right) x_{n}+\lambda _{n}Sx_{n}=x_{n}-\lambda
_{n}\left( I-S\right) x_{n}=P_{C}\left( I-\lambda _{n}A\right) x_{n}.
\end{equation*}%
So, from Theorem \ref{1*}, we obtain the desired conclusion.
\end{proof}


\begin{thebibliography}{99}
\bibitem{stam1} D. Kinderlehrer and G. Stampaccia, "An Iteration to
Variational Inequalities and Their Applications", \textit{Academic Press},
New York, NY, USA, 1990

\bibitem{stam2} J.-L. Lions and G. Stampacchia," Variational inequalities",%
\textit{\ Communications on Pure and Applied Mathematics}, vol. 20, pp.
493--519, 1967

\bibitem{yalis} Y. Yao, Y-C. Liou and N. Shahzad, "Construction of iterative
methods for variational inequality and fixed point problems", \textit{Numer.
Func. Ana. Optim.}, vol. 33(10):pp. 1250--1267, 2012

\bibitem{caku} T. Chamnarnpan and P Kumam, "A new iterative method for a
common solution of fixed points for pseudo-contractive mappings and
variational inequalities", \textit{Fixed Point Theory Appl.}, 2012,
2012:67,.doi:10.1186/1687-1812-2012-6

\bibitem{lita2} H. Iiduka and W. Takahashi, "Weak convergence of projection
algorithm for variational inequalities in Banach spaces", \textit{J. of
Math. Analysis and Applications}, vol. 339, no. 1, pp. 668--679, 2008

\bibitem{wosaya} N. C. Wong, D. R. Sahu, and J. C. Yao, "Solving variational
inequalities involving nonexpansive type mappings", \textit{Nonlinear
Analysis, Theory, Methods and Applications}, vol. 69, no. 12, pp.
4732--4753, 2008

\bibitem{yaliya} Y. Yao, Y-C. Liou and J-C. Yao, "An extragradient method
for fixed point problems and variational inequality problems", \textit{J. of
Inequalities and Applications}, doi:10.1155/2007/38752

\bibitem{lita} H. Iiduka and W. Takahashi, "Strong convergence theorems for
nonexpansive mappings and inverse-strongly monotone mappings", \textit{%
Nonlinear Analysis,} 61 (2005), pp. 341 -- 350

\bibitem{tato} W. Takahashi and M. Toyoda, "Weak convergence theorems for
nonexpansive mappings and monotone mappings", \textit{J. Optim. Theory Appl.}
vol. 118, pp. 417-428, 2003

\bibitem{mann} W. R. Mann, "Mean value methods in iteration", Proceedings of
the American Mathematical Society, vol. 4, pp. 506-510, 1953

\bibitem{khan} S. H. Khan, "A Piccard-Mann hybrid iterative process", 
\textit{Fixed Point Theory and Appl.}, doi:10.1186/1687-1812-2013-69

\bibitem{sahu} D. R. Sahu, "Applications of S iteration process to
constrained minimization problems and split feasibility problems", \textit{%
Fixed Point Theory}, 12(2011), No. 1, 187-204

\bibitem{yao1} Y. Yao, Y. C. Liou and J. C. Yao, "An iterative algorithm for
approximating convex minimization problem", \textit{Appl. Math. Comput.,}
vol. 188, no. 1, pp. 648--656, 2007

\bibitem{rabian} R. Wangkeeree and U. Kamraksa, "A General Iterative Method
for Solving the Variational Inequality Problem and Fixed Point Problem of an
Infinite Family of Nonexpansive Mappings in Hilbert Spaces", \textit{Fixed
Point Theory and Appl.,} (2009) Article ID 369215, doi:10.1155/2009/369215

\bibitem{wang1} S. Wang, "A general iterative method for obtaining an
infinite family of strictly pseudo-contractive mappings in Hilbert spaces",
Applied Mathematics Letter, vol. 24, pp. 901-907, 2011

\bibitem{shi} K. Shimoji and W. Takahashi, "Strong convergence to common
fixed points of infinite nonexpansive mappings and applications", \textit{%
Taiwanese J. Math.,} 5 (2001), no. 2, 387--404.

\bibitem{kirk} K. Goebel, W. A. Kirk, "Topics on Metric Fixed-Point Theory", 
\textit{Cambridge University Press}, Cambridge, England, 1990.

\bibitem{schu} J. Schu, "Weak and strong convergence to fixed points of
Asymptotically nonexpansive mappings", \textit{Bull. of the Australian Math.
Soc.}, vol. 43, pp. 153-159, 1991

\bibitem{xu} H. K. Xu and T. H. Kim, Convergence of hybrid steepest-descent
methods for variational inequalities, \textit{J. Optim. Theory Appl.}, 119
(2003), 185-201.

\bibitem{ceng} L.C. Ceng and J.C. Yao, "Hybrid viscosity approximation
schemes for equilibrium problems and fixed point problems of infinitely many
nonexpansive mappings", \textit{Appl. Math. Comput.,} 198 (2008), no. 2,
729--741.
\end{thebibliography}
\end{document}